\date{\today}
\theoremstyle{plain}
\newtheorem{theorem}{Theorem}[section]
\newtheorem{cor}[theorem]{Corollary}
\newtheorem{lem}[theorem]{Lemma}
\newtheorem{prop}[theorem]{Proposition}
\theoremstyle{definition}
\theoremstyle{remark}
\newtheorem{rem}{Remark}[section]
\numberwithin{equation}{section}
\def\be{\begin{eqnarray}}
\def\ee{\end{eqnarray}}
\def\pt{\partial_t}
\def\r{{\mathbb{T}}^2}
\def\md{\mbox{div}}
\begin{document}

\title[Asymptotic   limits of Quantum Navier-Stokes equations ]
{Inviscid, incompressible and semiclassical limits of Quantum Navier-Stokes equations }

\author{Hongli Wang}
\address{School of Mathematics and Statistics,
North China University of  Water Resources and Electric Power,
                                      Zhengzhou 450045, Henan Province, P. R.
                                      China   }
 \email{wanghongli@ncwu.edu.cn}


\author{Jianwei Yang }
\address{School of Mathematics and Statistics,
North China University of  Water Resources and Electric Power,
                                      Zhengzhou 450045, Henan Province, P. R.
                                      China   }
 \email{ yangjianwei@ncwu.edu.cn}

\keywords{Quantum Navier-Stokes equations, incompressible limit, inviscous limit, relative
entropy method}

\subjclass[2010]{35B25, 35G25, 35Q35, 35Q40, 76Y05}

\begin{abstract}

In the paper, we consider the inviscid, incompressible and semiclassical limits  limits of the barotropic quantum Navier-Stokes equations of compressible
flows in a periodic domain.  We show that the limit solutions satisfy the incompressible Euler system  based on the relative entropy inequality and on the detailed analysis for
general  initial data. The rate of convergence is estimated in terms of the Mach number.

\end{abstract}

\maketitle

\section{ Introduction}

\quad This paper is concerned with the following non-dimensional
 compressible Quantum-Navier-Stokes (QNS) 
system on $(0, T)\times\Omega$,
\begin{align}
             &\pt n+\md (nu)=0, \label{qmhd1}\\
             &\displaystyle\pt (nu)+\md(nu\otimes u)+\nabla p(n)
 -2\kappa^2n\nabla
 \left(\frac{\Delta\sqrt{n}}{\sqrt{n}}\right)
 =2\nu\md(nD(u)).\label{qmhd2} 
\end{align}
The domain $\Omega$ we consider is the $2$-dimensional torus $\r$.
There are  two unknowns, the density $n=n(t,x)$ and the velocity field $u=u(t,x)$   of the fluid. $\nu$ and $\kappa$ are positive constants and they are called the viscosity and the dispersive coefficients.
$p(n)$ is the pressure, and in this paper, we consider
the case of isentropic flows with $p(n)=n^{\gamma}$ for
$\gamma>1$.
 In \eqref{qmhd2}, $D(u)=(\nabla u+\nabla u^T)/2$. The term $2\kappa^2n\nabla
 \left(\Delta\sqrt{n}/\sqrt{n}\right)$ can be interpreted as the
quantum Bohm potential term, or as a quantum correction to the
pressure. Moreover,  the following relation
holds\begin{align}\label{ret1}
    2n\nabla
 \left(\frac{\Delta\sqrt{n}}{\sqrt{n}}\right)
 =\nabla\Delta n-4\md(\nabla \sqrt{n}
 \otimes\nabla \sqrt{n}),
\end{align}which can avoid using too high regularities of the
density
 $n$.  Brull and M\'{e}hats \cite{bm09} utilized a moment
method and a Chapman-Enskog expansion around the quantum equilibrium
to derive \eqref{qmhd1}-\eqref{qmhd2} from a Wigner equation.

The main purpose of this paper is to rigorously prove  the combined
incompressible and inviscid limits in the framework of the
global weak solutions to \eqref{qmhd1}-\eqref{qmhd2}.  To begin
with, we introduce the scaling
\begin{gather*}\label{naaz}
 t\mapsto \epsilon t, u\mapsto\epsilon u, \end{gather*}
and set
    $$\kappa=\epsilon^{2}, \nu=\epsilon^2,$$
 where $0<\epsilon<1$ is a small parameter proportional to the Mach number. With such scalings, the quantum
Navier-Stokes equations \eqref{qmhd1}-\eqref{qmhd2} read as
\begin{align}
 & \pt n +\md (nu)=0,\label{n} \\
  & \displaystyle\pt (nu)
  +\md(nu\otimes u)+\frac{1}{\epsilon^2}\nabla p(n)
  \notag\\
  &\quad\quad\quad
 -2\epsilon^{2}\nabla
 \left(\frac{\Delta\sqrt{n}}{\sqrt{n}}\right)
 =2\epsilon\md(nD(u)).\label{u}
\end{align}

Our aim is to identify the scaled QNS system \eqref{n}-\eqref{u}
in the limit $\epsilon\to0$, meaning the incompressible, inviscid and  semiclassical  limit. More precisely,  we want to prove that the weak solution of the QNS system  \eqref{n}-\eqref{u} converges to the classical solution of the corresponding incompressible Euler system, namely
\begin{align}
    \pt v+(v\cdot\nabla)v+\nabla \Pi=
   0,\quad\md v=0.\label{2018061001}
\end{align}

Multiplying equation \eqref{u} by $u$ and integrating by parts, we obtain the energy inequality of  the QNS system\eqref{n}-\eqref{u} in its integral form
\begin{equation}\label{qmhdener}
 {E}(t)+2\epsilon \int^{t}_{0}\int_{\r}n|D(u)|^2dxdt\leq E(0),\end{equation}
 where the total energy $E$ is given  by the sum of the
kinetic, internal and quantum energy:
\begin{equation*}
E(t)=\int\left(\frac12n|u|^2+\frac{1}{\epsilon^{2}}H(n)+2\epsilon^{2}|\nabla\sqrt{n}|^2\right)dx,
\end{equation*} 
with  
$$E(0)=\int\left(\frac12n_{0}|u_{0}|^2+\frac{1}{\epsilon^{2}}H(n_{0})+2\epsilon^{2}|\nabla\sqrt{n_{0}}|^2\right)dx$$
the initial energy, and 
\begin{equation}\label{2017070711}H(n)=\frac{1}{\gamma-1}(n^{\gamma}-\gamma(n-1)-1)=\frac{1}{\gamma-1}(p(n)-p'(1)(n-1)-p(1))\end{equation}
the Helmhoitz free energy.

Without  quantum effects, system \eqref{n}-\eqref{u} reduces to the Navier-Stokes equations,  whose incompressible inviscid limit was investigated by Lions-Masmoudi \cite{lm98} in the case of well-prepared initial data and by  Masmoudi \cite{m01}  in the case of ill-prepared initial data in the whole space case and also in the periodic case.  
In \cite{cn2017}, Caggio-Nečasová  consider the inviscid incompressible limits of the rotating compressible Navier-Stokes system for a barotropic fluid and show that the limit system is represented by the rotating incompressible Euler equation on the whole space.
The present paper will extend the results in \cite{m01}
 to the 
quantum Navier-Stokes equations.  In comparison with \cite{lm98,m01,cn2017},  the present problem features some additional mathematical difficulties related  to  the third-order
 derivative term in the momentum equations.  
 Next, the viscosity we consider
 here is dependent on the density.    Finally, we use the refined energy analysis to obtain the desired rate of convergence.      Therefore new
 techniques and ideas are introduced to treat them.
Our approach is based on  the existence of global in time \emph{finite energy weak solutions}   \cite{as2017}  for the Cauchy problem  of QNS system \eqref{n}-\eqref{u} and the recently discovered relative entropy inequality \cite{fn2012,g2011} which gives us a very powerful and concise tool for the purpose of measuring the stability of a solution compared to another solution with a better regularity. We will prove  that   the convergence of QNS system to the limit problem (incompressible Euler system) takes place on any time interval $[0, T]$ on which the Euler system \eqref{2018061001} possesses a regular solution by introducing a new  relative entropy functional.  

Recently, we saw a publication \cite{kl2018} by Kwon-Li  in which a similar result was proved on the convergence of the degenerate quantum compressible Navier-Stokes equations with damping to the incompressible Navier-Stokes/Euler equations. Here we state the main differences between two papers. First,  the relative entropy functional we introduced  is  very different  from that of  \cite{kl2018}.  Second, the initial condition  which is needed in our main results is different from that of their main results in  \cite{kl2018}. Finally, The rate of convergence in two papers is also different. We also mention that the incompressible limits of  the compressible Navier-Stokes system and related models are very interesting and there are a lot of references  on this topic.   See Alazard \cite{a06} for Navier-Stokes equations, Feireisl-Novotný \cite{fn2013,fn2014} for the Navier-Stokes-Fourier system and the Euler-Boussinesq System, Jiang-Ou \cite{jo11} for the non-isentropic Navier-Stokes equations, Hu-Wang \cite{hw2009} 
for the viscous compressible magnetohydrodynamic flows, Jiang-Ju-Li \cite{jjl2010} for the compressible magnetohydrodynamic equations with vanishing viscosity coefficients, Ukai \cite{u86} for the Euler equations, Wang-Yu \cite{wy2014} for the compressible flow of Liquid crystals and references therein.

In this present paper, we denote by  $\textbf{1}$  the characteristics
function and  $C$ the generic positive constants independent of
$\epsilon$.

The rest of this paper is organized as follows. In the next section,
we state some useful known results and 
 our main results. Finally, Section 3
  is devoted to the proof of our main result.

\section{ Main Results}

\indent \quad In this section, we state our main results. For this, we first recall the following classical result on the existence of sufficiently regular solutions of the incompressible Euler system \eqref{2018061001} with the initial data $v(0)=v_{0}$.
\begin{prop}
\label{prop2018061002} (Ref.\cite{K, Mc}) Let $v_{0}\in C^{\infty}(\r)$ satisfying $\emph{\md}{v}_{0}=0$. Then, for any $T>0$,  the incompressible Euler system \eqref{2018061001} exists  a unique classical  solution $v$ satisfying  \begin{equation}
v,\Pi\in C^{\infty}([0,T]\times\r).\label{ie}
\end{equation}  \end{prop}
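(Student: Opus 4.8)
The plan is to prove this classical result by the standard energy method for the incompressible Euler system, combined with the special two-dimensional vorticity structure that upgrades local to global existence. First I would construct local-in-time solutions in the Sobolev spaces $H^s(\r)$ with $s>2$ (so that $H^s\hookrightarrow C^1$) via a regularized approximation scheme, e.g. Friedrichs mollification or a Galerkin truncation, deriving uniform estimates and passing to the limit. The core a priori bound comes from applying $\px$ to \eqref{2018061001}, pairing with $\px v$ in $L^2$, and using $\md v=0$ to eliminate the pressure: since $\int\px\nabla\Pi\cdot\px v\,dx=-\int\px\Pi\,\md(\px v)\,dx=0$. Controlling the transport commutator $[\px,v\cdot\nabla]v$ by the Kato–Ponce estimate then yields
$$\ft\|v\|_{H^s}^2\le C\|\nabla v\|_{L^\infty}\|v\|_{H^s}^2,$$
so Gronwall's inequality produces a local solution on some $[0,T^*)$, together with the continuation criterion that the solution persists as long as $\int_0^t\|\nabla v\|_{L^\infty}\,d\tau$ remains finite.

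Second, I would globalize using the two-dimensional vorticity formulation. Writing $\omega=\partial_{x_1}v_2-\partial_{x_2}v_1$, taking the curl of \eqref{2018061001} gives the pure transport equation $\pt\omega+(v\cdot\nabla)\omega=0$, whence $\|\omega(t)\|_{L^p}=\|\omega_0\|_{L^p}$ for every $p$, in particular $\|\omega(t)\|_{L^\infty}=\|\omega_0\|_{L^\infty}$ is conserved. Invoking the Beale–Kato–Majda criterion, $\|\nabla v\|_{L^\infty}$ is controlled — up to a logarithmic factor in $\|v\|_{H^s}$ — by $\|\omega\|_{L^\infty}$; feeding this bound back into the energy estimate shows $\|v(t)\|_{H^s}$ cannot blow up in finite time, so the solution extends to all of $[0,T]$ for arbitrary $T>0$.

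Third, the smoothness \eqref{ie} follows by bootstrapping: since $v_0\in C^\infty(\r)=\bigcap_s H^s(\r)$, the estimates above propagate every $H^s$ norm to $[0,T]$, giving $v\in C([0,T];H^s)$ for all $s$, hence spatial smoothness, while time derivatives of any order are recovered directly from the equation, yielding $v\in C^\infty([0,T]\times\r)$. The pressure is determined by taking the divergence of \eqref{2018061001}: using $\md v=0$ one obtains the elliptic problem $-\Delta\Pi=\sum_{i,j}\partial_{x_i}v_j\,\partial_{x_j}v_i$ on $\r$, whose solution, unique up to the constant fixed by the zero-mean normalization, inherits the spatial regularity of $v$ by elliptic regularity, so $\Pi\in C^\infty([0,T]\times\r)$.

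Uniqueness follows from a standard $L^2$ estimate on the difference of two solutions with the same data: subtracting the equations, pairing with $v-\tilde v$, dropping the pressure via $\md(v-\tilde v)=0$, and using the Lipschitz bound on $\nabla v$, Gronwall forces $v\equiv\tilde v$. The main obstacle is the global-in-time step: unlike the local theory, which is dimension-independent, the passage to arbitrary $T$ genuinely relies on the two-dimensional conservation of vorticity, and the corresponding statement in higher dimensions remains open.
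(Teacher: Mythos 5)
Your argument is correct, but it cannot be compared with a proof in the paper, because the paper offers none: Proposition \ref{prop2018061002} is stated as a known classical fact with a bare citation of Kato and McGrath. Your route --- local well-posedness in $H^s(\r)$, $s>2$, via mollification/Galerkin with the pressure eliminated through $\md v=0$, Kato--Ponce commutator estimates giving $\frac{d}{dt}\|v\|_{H^s}^2\le C\|\nabla v\|_{L^\infty}\|v\|_{H^s}^2$, then globalization from the two-dimensional vorticity transport $\pt\omega+(v\cdot\nabla)\omega=0$ and conservation of $\|\omega\|_{L^\infty}$ fed through the Beale--Kato--Majda logarithmic inequality --- is the standard modern proof and is sound; note that it yields an a priori bound on $\|v(t)\|_{H^s}$ that is double-exponential in time, which is all one needs. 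The cited classical works proceed differently: McGrath (and Kato's earlier two-dimensional paper) obtain global solutions by vanishing-viscosity or successive-approximation schemes with potential-theoretic H\"older estimates on the vorticity and the Biot--Savart law, rather than Sobolev energy estimates; your approach is shorter given the modern toolkit, while theirs is self-contained at the level of classical solutions and predates BKM, which in any case is stronger than needed here since $\|\omega(t)\|_{L^\infty}=\|\omega_0\|_{L^\infty}$ exactly. Two small points worth tightening: on $\r$ the pressure is only determined up to a function of time, so the smoothness claim for $\Pi$ in \eqref{ie} requires fixing the zero-mean normalization, as you do (and the elliptic problem $-\Delta\Pi=\sum_{i,j}\partial_{x_i}v_j\,\partial_{x_j}v_i$ is solvable because its right-hand side, being $\md\,\md(v\otimes v)$, has zero mean); and joint $C^\infty$ regularity in $(t,x)$ should be stated via $v\in C([0,T];H^s)\cap C^1([0,T];H^{s-1})$ for all $s$ before bootstrapping time derivatives through the equation. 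Your closing remark is also accurate: the global step genuinely uses $d=2$, and the three-dimensional analogue remains open.
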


Recently, in \cite{as2017} Antonelli-Spirito consider the QNS system\eqref{n}-\eqref{u} both in two and in three space dimensions and prove the following result on the global existence of finite energy weak solutions for large initial data.  \begin{prop} [Ref. \cite{as2017}]
 Let $\alpha$ be a small fixed positive number and the initial data $n_{0},u_{0}$ satisfying 
 \begin{align}
    &0< \frac{1}{\overline{n}_{0}}< n_{0}<\overline{n}_{0}, n_{0}\in L^{1}(\r)\cap L^{\gamma}(\r), \nabla\sqrt{n_{0}}\in L^{2}(\r)\cap L^{2+\alpha}(\r),\label{2018061006}\\
    & \sqrt{n_{0}}u_{0}\in L^{2}(\r)\cap L^{2+\alpha}(\r) \label{2018061007}\end{align}
 for a positive constant $\overline{n}_{0}$. 
 Then for any $0 < T <+\infty$ , there exists a finite energy weak solutions $(n^{\epsilon}, u^{\epsilon})$ of the compressible QNS system \eqref{n}-\eqref{u}   with initial data $n_{0},u_{0}$   on $(0, T]\times \r$  satisfying 
 \begin{itemize}
\item Integrability conditions:
\begin{equation*}
n\in L^{\infty}(0,T; L^{\gamma}(\r)),\; \sqrt{n}u\in L^{\infty}(0,T; L^{2}(\r)),\; \sqrt{n}\in L^{\infty}(0,T; H^{1}(\r)).\end{equation*}
\item Continuity equation:
\begin{equation*}
\int_{\r} n_{0}\phi(0)dx+\int^{T}_{0} \int_{\r} n\phi_t+\sqrt{n}\sqrt{n} u\nabla\phi dxdt=0,
\end{equation*}
for any $\phi\in C_0^{\infty}([0,T]\times (\r))$.
 \item  Momentum equation:
\begin{equation*}
\begin{aligned}
&\int_{\r}n_{0}u_{0}\psi(0)+\int^{T}_{0} \int_{\r}\sqrt{n}\sqrt{n} u\psi_tdxdt\\&+\int^{T}_{0} \int_{\r} \sqrt{n} u \otimes \sqrt{n} u\nabla\psi dxdt+\frac{1}{\epsilon^{2}}\int^{T}_{0} \int_{\r} p(n)\emph{\md}\psi dxdt\\
&-2\epsilon\int^{T}_{0} \int_{\r} (\sqrt{n} u\otimes\nabla\sqrt{n})\nabla\psi dxdt-2\epsilon\int^{T}_{0} \int_{\r} (\nabla\sqrt{n} \otimes\sqrt{n} u)\nabla\psi dxdt\\
&+\epsilon\int^{T}_{0} \int_{\r} \sqrt{n}\sqrt{n} u\Delta\psi dxdt+\epsilon\int^{T}_{0} \int_{\r} \sqrt{n}\sqrt{n} u\nabla\emph{\md}\psi dxdt\\
&-4\epsilon^{2}\int^{T}_{0} \int_{\r} t(\nabla\sqrt{n}\otimes\nabla\sqrt{n} )\nabla\psi dxdt+2\epsilon^{2}\int^{T}_{0} \int_{\r} \sqrt{n}\nabla\sqrt{n}\nabla\emph{\md}\phi dxdt=0,\\
\end{aligned}
\end{equation*}
for any $\psi\in C_0^{\infty}([0,T]\times (\r))$. 
\item Energy inequality: if
\begin{equation*}
E(t)=\int_{\r}\left(\frac12n|u|^2+\frac{1}{\epsilon^{2}}H(n)+2\epsilon^{2}|\nabla\sqrt{n}|^2\right)dx,
\end{equation*} 
then the following energy inequality is satisfied for a.e. $t\in[0,T]$
\begin{equation*}
E(t)\leq E(0).
\end{equation*}
\end{itemize}

   \end{prop}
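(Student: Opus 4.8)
The plan is to construct the weak solution by a multi-level approximation and then pass to the limit using the two structural \emph{a priori} estimates available for this system: the physical energy balance \eqref{qmhdener} and the Bresch--Desjardins (BD) entropy. Because the viscosity $2\epsilon\,\md(nD(u))$ degenerates on the vacuum set $\{n=0\}$, one cannot work with \eqref{n}--\eqref{u} directly; instead I would introduce a regularized problem carrying (i) a small artificial viscosity $\eta\Delta n$ in the continuity equation to keep the density smooth and bounded below, (ii) a cold-pressure term $\delta n^{-k}$ and possibly a friction term $r_0 u+r_1|u|^2u$ to prevent concentration and to furnish a nondegenerate velocity estimate at the approximate level, and (iii) a Faedo--Galerkin truncation in the momentum equation. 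On this regularized system the velocity is genuinely controlled in $H^1$, so the existence of approximate solutions follows by standard fixed-point and continuation arguments, while the hypotheses \eqref{2018061006}--\eqref{2018061007} ensure that the initial energy and initial BD entropy are finite uniformly in the regularization parameters.

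The heart of the argument is to close two estimates uniformly in $\eta,\delta$ and the Galerkin dimension. The first is the energy inequality \eqref{qmhdener}, obtained by testing the momentum equation with $u$; it yields $\sqrt{n}u\in L^\infty_tL^2_x$, $H(n)\in L^\infty_tL^1_x$, $\nabla\sqrt{n}\in L^\infty_tL^2_x$ and, from the dissipation, $\sqrt{n}\,D(u)\in L^2_{t,x}$. The second is the BD entropy, derived by testing with the effective velocity $u+2\epsilon\nabla\log n$; after the usual cancellations it produces the extra bound $\nabla n^{\gamma/2}\in L^2_{t,x}$ together with control of a term that, precisely because of the quantum Bohm potential, upgrades to $\nabla^2\sqrt{n}\in L^2_{t,x}$, i.e. $\sqrt{n}\in L^2_tH^2_x$. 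This last regularity is exactly what is needed to give meaning to the third-order quantum term, which I would rewrite via \eqref{ret1} as $\nabla\Delta n-4\,\md(\nabla\sqrt{n}\otimes\nabla\sqrt{n})$, so that only second derivatives of $\sqrt{n}$ and products of its first derivatives enter the weak formulation. The higher integrability assumptions $\nabla\sqrt{n_0},\sqrt{n_0}u_0\in L^{2+\alpha}$ are then propagated to slightly-better-than-$L^2$ bounds for $\nabla\sqrt{n}$ and $\sqrt{n}u$, which I would use to identify the quadratic limits.

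With the uniform bounds in hand, I would pass to the limit in the parameters by weak compactness, the genuine issue being the strong compactness required in the nonlinear terms $nu\otimes u=(\sqrt{n}u)\otimes(\sqrt{n}u)$ and $p(n)$. From the continuity equation one controls $\partial_t n$, so an Aubin--Lions--Simon argument gives strong convergence of $n$, and hence of $\sqrt{n}$ via the $\nabla\sqrt{n}$ bound, in $L^p_{t,x}$; combined with the weak limit of the momentum $m=\sqrt{n}\,\sqrt{n}u$ and the $L^2_tH^2_x$ bound on $\sqrt{n}$, this lets me pass to the limit in the convective and quantum terms. On the vacuum set one defines the velocity only through the product $\sqrt{n}u$ and sets $m=0$ where $n=0$, so that $nu\otimes u$ is interpreted as $(\sqrt{n}u)\otimes(\sqrt{n}u)$ and stays well defined in the limit.

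The main obstacle is the degenerate viscosity: there is no uniform bound on $\nabla u$ separated from the density weight, so both the convective and the viscous terms must be recovered by purely structural compactness, and one must verify that no concentration (defect measure) survives. Controlling this rests entirely on combining the BD entropy with the extra $H^2$-regularity of $\sqrt{n}$ supplied by the Bohm potential; showing that these estimates are stable under the removal of the cold-pressure and artificial-viscosity regularizations, and that the recovered limit satisfies the stated integrability together with the weak momentum identity carrying the correct quantum stress, is the delicate step.
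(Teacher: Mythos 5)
Note first that the paper contains no proof of this proposition: it is imported verbatim from Antonelli--Spirito \cite{as2017}, and the relative-entropy argument of Section 3 uses it purely as a black box. So your attempt can only be measured against the construction in that reference. At the level of strategy your outline is faithful to it: the two structural estimates (the energy inequality \eqref{qmhdener} and the Bresch--Desjardins entropy), the observation that the Bohm potential upgrades the BD entropy to $\sqrt{n}\in L^2(0,T;H^2(\r))$ --- the decisive gain over the non-quantum degenerate system --- the rewriting \eqref{ret1} so that the weak formulation only sees $\nabla\sqrt{n}$ and $\nabla^2\sqrt{n}$ (this is exactly the form of the momentum identity in the statement), and the formulation in the variables $(\sqrt{n},\Lambda=\sqrt{n}\,u)$ with $\Lambda:=0$ on the vacuum set, are all the right ingredients, and you correctly identify where the hypotheses $\nabla\sqrt{n_0},\ \sqrt{n_0}u_0\in L^{2+\alpha}$ of \eqref{2018061006}--\eqref{2018061007} must enter.

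Two concrete reservations, however. First, your approximation layer (cold pressure $\delta n^{-k}$ plus drag terms $r_0u+r_1|u|^2u$) is the Vasseur--Yu/Li--Xin scaffold, whereas the whole point of \cite{as2017} is to dispense with damping terms; compare \cite{kl2018}, where precisely because the Mellet--Vasseur-type estimate is unavailable the damping is kept all the way into the limit system. The distinction is not cosmetic: removing the drag terms uniformly is essentially as hard as the original problem, and the logarithmic estimate $n|u|^2\ln(1+|u|^2)\in L^\infty_t L^1_x$, which is what allows Vasseur--Yu to do this for the classical degenerate Navier--Stokes system, is obstructed for QNS by the quantum term. The substitute in \cite{as2017} is the propagation of $\sqrt{n}\,u\in L^\infty(0,T;L^{2+\alpha}(\r))$, and that is a genuinely delicate weighted estimate exploiting the dissipation structure, not the routine bookkeeping your phrase ``are then propagated'' suggests; it is the crux of the proof and your sketch leaves it unproved. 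Second, your compactness step is stated too weakly: Aubin--Lions strong convergence of $n$ (hence of $\sqrt{n}$) together with the weak limit of the momentum only identifies weak limits of linear quantities; to pass to the limit in $nu\otimes u=\Lambda\otimes\Lambda$ you need almost-everywhere convergence of $\Lambda$ plus equi-integrability of $n|u|^2$ --- i.e.\ exactly the uniform $L^{2+\alpha}$ bound above --- and on the vacuum set the identification of the limit momentum requires an additional argument (in \cite{as2017} this is where the smallness of $\alpha$ and the bound on $\nabla n^{1/4}$ in $L^4$ coming from the BD entropy are used). With these two steps actually supplied your plan would reconstruct the quoted theorem; as written, they are the gaps.
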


\begin{rem}
 The scaling we introduced for QNS system   satisfies the requirement to  the viscosity and the dispersive coefficients.\end{rem} 
Motivated by \cite{fn2012},  we introduce the  following relative entropy functional
\begin{align} 
\label{2018061101}
    \mathcal{E}(t)=& \int_{\r}\left[\frac{1}{2}n|u-v-\nabla\Psi|^{2}+2\epsilon^{2}|\nabla\sqrt{n}-\nabla\sqrt{1+\epsilon\sigma}|^2\right.\notag\\
    &+\left.\frac{1}{\epsilon^{2}}[H(n)-H'(1+\epsilon\sigma)(n-1-\epsilon \sigma)-H(1+\epsilon \sigma)]\right]dx,  \end{align}
where $\sigma,\Psi$ are the solution of the following acoustic system related to the QNS system \eqref{n}-\eqref{u} by the following linear relations \cite{fns2014}\begin{align}
& \partial_{t}\sigma+\frac{1}{\epsilon}\Delta\Psi=0,\label{2018062501}\\
&  \partial_{t}\nabla\Psi+\frac{p'(1)}{\epsilon}\nabla\sigma=0\label{2018062502}\end{align}
supplemented with the initial data
\begin{equation}
\label{2018062503}
\sigma(0)=\sigma_{0}=n^{1}_{0},\quad \nabla\Psi(0)=\nabla\Psi_{0}=u_0-v_0,
\end{equation}
where $v_{0}=\textbf{H}[u_{0}]$ denotes
the Helmholtz projection into the space of solenoidal functions. 
Similarly to \cite{fn2013}, our goal is to apply a Gronwall-type argument to the relative entropy inequality \eqref{2018070102}to deduce the strong convergence to the limit system claimed in Theorem \ref{th2018070308}.
The initial data $(n_{0}^{1},\nabla\Psi_{0})$  can be regularized in the following way
\begin{equation}
\label{2018062510}
n_{0}^{1}=n_{0,\eta}^{1}=\chi_{\eta}\star (\psi_{\eta}n_{0}^{1}),\quad \nabla\Psi_{0}=\nabla\Psi_{0,\eta}=\chi_{\eta}\star (\psi_{\eta}\nabla \Psi_{0}),\quad \eta>0,\end{equation}
where $\{\chi_{\eta}\}$ is a family of regularizing kernels and $\psi_{\eta}\in C^{\infty}_{0}(\r)$ is standard cut-off function. 
The total change in energy of the fluid caused by acoustic wave is give by 
\begin{equation*}
\frac12\int_{\r}(p'(1)|\sigma|^{2}+|\nabla\Psi|^{2})dx,
\end{equation*}
which is conserved in time, namely 
\begin{equation}
\label{2018062511}
\frac12\int_{\r}(p'(1)|\sigma|^{2}+|\nabla\Psi|^{2})dx=\frac12\int_{\r}(p'(1)|\sigma_0|^{2}+|\nabla\Psi_0|^{2})dx.\end{equation}
 In addition,  for any $t>0$, the dispersive estimates hold \cite{fn2013}
\begin{equation}
\label{2018062512}
\|\nabla\Psi\|_{W^{k,p}}+\|\sigma\|_{W^{k,p}}\leq C\left(1+\frac{t}{\epsilon}\right)^{\frac{1}{p}-\frac{1}{q}}\left(\|\nabla\Psi_{0}\|_{W^{k,p}}+\|n^{1}_0\|_{W^{k,p}}\right),\end{equation}
where   $$2\leq p\leq+\infty, \quad \frac{1}{p}+\frac{1}{q}=1,\quad k=0,1,\cdots.$$

    The main result of this paper can be stated as
follows.
\begin{theorem} \label{th2018070308} Let $M>0$ be a constant. Let the initial data of the compressible QNS system \eqref{n}-\eqref{u} be of the following form
\begin{align}
  & n(0)= n_{0,\epsilon}  =1+\epsilon n_{0,\epsilon}^{1},\quad u(0)=u_{0,\epsilon},\label{2018070301}\\
  & \|n_{0,\epsilon}^{1}\|_{H^1(\r)}+\|u_{0,\epsilon}\|_{L^{2}(\r)}\leq M  \label{2018070302}\end{align}
and satisfying \eqref{2018062510} for fixed $\eta$. Let  all
 requirements of  Proposition \ref{prop2018061002} be satisfied with the initial data $v_{0}=\mathbf{H}[u_{0}]$ for the Euler system \ref{2018061001}. Let $(\sigma,\Psi)$ be the solution of the acoustic system  \eqref{2018062501}-\eqref{2018062502} with the initial data \eqref{2018062510}.  Then, for any $t\in[0,T]$ and any weak solutions $(n,u)$ of  compressible QNS system \eqref{n}-\eqref{u},  we have 
 \begin{align}\label{2018071401}
 &\|\sqrt{n}(u-v-\nabla\Psi)\|^{2}_{L^{2}(\r)}+\left\|\frac{n-1-\epsilon\sigma}{\epsilon}\right\|^{2}_{L^{2}(\r)}\notag\\&+\epsilon^{2}\|\nabla\sqrt{n}-\nabla\sqrt{1+\epsilon\sigma}\|^2_{L^{2}(\r)}\notag\\
 \leq&C\left(\|u_{0,\epsilon}-u_{0}\|^2_{L^{2}(\r)}+\|n^{1}_{0,\epsilon}-n^{1}_{0}\|^2_{H^{1}(\r)}+ \epsilon^{\min\{1-\frac{1}{\gamma},\frac{1}{\gamma}\}}\right) ,\end{align}
for a.e. $t\in[0,T]$.
\end{theorem}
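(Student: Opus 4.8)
The plan is to prove Theorem~\ref{th2018070308} by the relative entropy method, taking the functional $\mathcal{E}(t)$ of \eqref{2018061101} built on the test density $r=1+\epsilon\sigma$ and the test velocity $U=v+\nabla\Psi$. The energy inequality \eqref{qmhdener} supplies the uniform-in-$\epsilon$ bounds on $\sqrt{n}\,u$, on $\nabla\sqrt{n}$ and on $\epsilon^{-2}H(n)$ that make every manipulation below legitimate for a merely weak solution $(n,u)$. The first step is to derive a relative entropy inequality of the schematic form
\be
\mathcal{E}(t)+2\epsilon\int_0^t\!\!\int_{\r} n\,|D(u-U)|^2\,dx\,ds\le \mathcal{E}(0)+\int_0^t \mathcal{R}(s)\,ds,
\ee
by inserting $U$ and $r$ into the weak formulations of the continuity and momentum equations from the Antonelli--Spirito existence result \cite{as2017}, and subtracting the identities satisfied by the smooth triple $(v,\sigma,\Psi)$. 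The entire argument then reduces to a splitting $\mathcal{R}\le C\,\mathcal{E}+\mathcal{S}$, where $\mathcal{S}$ is integrable in time with $\int_0^T\mathcal{S}\,ds\le C\,\epsilon^{\min\{1-1/\gamma,\,1/\gamma\}}$.

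The needed cancellations are forced by the choice of test functions. Since $v$ solves \eqref{2018061001} and is divergence free, the convective and pressure contributions built from $v$ reorganize into a quadratic form controlled by $\|\nabla v\|_{L^\infty}\,\mathcal{E}$, which is the main source of the Gronwall factor. Since $(\sigma,\Psi)$ solves the acoustic system \eqref{2018062501}-\eqref{2018062502}, the fast terms coupling $\pt\nabla\Psi$ to $\nabla\sigma$ and $\pt\sigma$ to $\Delta\Psi$ cancel to leading order, and the residual acoustic contributions are products of $(\sigma,\nabla\Psi)$ and their derivatives with the smooth Euler data; these I would place into $\mathcal{S}$ by invoking the dispersive decay \eqref{2018062512}, whose factor $(1+t/\epsilon)^{1/p-1/q}$, once integrated in time against the $\epsilon^{-1}$ prefactors appearing in the momentum equation, produces quantities of size $\epsilon^{\theta}$.

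The genuinely new difficulties, relative to \cite{lm98,m01,cn2017}, are the third-order Bohm term and the density-dependent viscosity. For the quantum term I would use \eqref{ret1} to replace $2n\nabla(\Delta\sqrt{n}/\sqrt{n})$ by $\nabla\Delta n-4\md(\nabla\sqrt{n}\otimes\nabla\sqrt{n})$, so that only $\nabla\sqrt{n}\in L^\infty_tL^2_x$ (bounded by the energy) and $\nabla\sqrt{1+\epsilon\sigma}$ (bounded by \eqref{2018062512}) enter the relative quantum energy; the surviving remainder carries the prefactor $\epsilon^2$ and is harmless. For the viscosity, the dissipation $2\epsilon\int n|D(u)|^2$ must be compared with the mixed and full test terms $4\epsilon\int n\,D(u):D(U)$ and $2\epsilon\int n|D(U)|^2$; completing the square leaves $2\epsilon\int n|D(u-U)|^2$ on the left, while the cross term is absorbed partly into this dissipation by Young's inequality and partly into $\mathcal{S}$ using the smoothness of $U$ and the $\epsilon$ prefactor. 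The most technical point is the pressure relative entropy: splitting $\r$ into the region where $|n-1|$ is small, on which $H(n)-H'(r)(n-r)-H(r)$ is comparable to $|n-r|^2$ by the strict convexity of $H$, and the region where $n$ is large, on which only the $L^\gamma$ bound is available, and balancing the two, is exactly what yields the exponent $\min\{1-1/\gamma,1/\gamma\}$ in \eqref{2018071401}. I expect this balancing to be the principal obstacle.

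It remains to close the estimate and translate it into \eqref{2018071401}. Gronwall's lemma applied to $\mathcal{E}(t)\le\mathcal{E}(0)+\int_0^t(C\,\mathcal{E}+\mathcal{S})\,ds$ gives $\mathcal{E}(t)\le C\big(\mathcal{E}(0)+\epsilon^{\min\{1-1/\gamma,1/\gamma\}}\big)$. The kinetic part of $\mathcal{E}$ is precisely $\tfrac12\|\sqrt{n}(u-v-\nabla\Psi)\|_{L^2(\r)}^2$; the convexity lower bound on the relative pressure energy controls $\|(n-1-\epsilon\sigma)/\epsilon\|_{L^2(\r)}^2$ up to the large-density error already absorbed into $\mathcal{S}$; and the quantum part equals $2\epsilon^2\|\nabla\sqrt{n}-\nabla\sqrt{1+\epsilon\sigma}\|_{L^2(\r)}^2$. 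On the right, the initial relative entropy $\mathcal{E}(0)$ is bounded by $\|u_{0,\epsilon}-u_0\|_{L^2(\r)}^2+\|n^1_{0,\epsilon}-n^1_0\|_{H^1(\r)}^2$ plus lower-order terms, via \eqref{2018062503}, \eqref{2018062510} and the conservation law \eqref{2018062511}, which delivers \eqref{2018071401}.
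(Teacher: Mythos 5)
Your proposal follows essentially the same route as the paper: the same relative entropy functional built on the pair $(1+\epsilon\sigma,\,v+\nabla\Psi)$, the relative entropy inequality derived by testing the weak momentum formulation (with the Bohm term rewritten via \eqref{ret1}), the Lions-type splitting of $n-1$ over $\{|n-1|<1\}$ and $\{|n-1|\geq1\}$ combined with the dispersive decay \eqref{2018062512} to produce the rate $\epsilon^{\min\{1-\frac{1}{\gamma},\frac{1}{\gamma}\}}$, the Young-inequality absorption of the viscous cross term into the dissipation, and the Gronwall closure with $\mathcal{E}(0)$ bounded by the initial $L^2$/$H^1$ differences. The paper simply carries out the bookkeeping you sketch, organizing the remainder into the terms ${\mathbf{I}}_{1},\dots,{\mathbf{I}}_{11}$ and using the acoustic energy conservation \eqref{2018062511} to realize the cancellations of the fast oscillatory terms that you anticipate.
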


For any vector field $\phi$, we use $P$ and $Q$ to denote the divergence-free part of  $\phi$, and the gradient part of $\phi$, respectively, i.e. $P = I- Q$ and $Q = \nabla􏰓-\nabla\triangle^{-1} \md$.

The following corollary is a consequence of the above theorem. 

\begin{cor}\label{2018070906}
 In Theorem \ref{th2018070308}, we assume that 
  \begin{equation}
\label{2018070310}
\|u_{0,\epsilon}-u_{0}\|^2_{L^{2}(\r)}+\|n^{1}_{0,\epsilon}-n^{1}_{0}\|^2_{H^{1}(\r)}\leq C \epsilon^{\min\{1-\frac{1}{\gamma},\frac{1}{\gamma}\}}. \end{equation}Then, for any weak solutions $(n,u)$ of  compressible QNS system \eqref{n}-\eqref{u},  we have 
\begin{equation}
\label{2018070910}
\|P(\sqrt{n}u)-v\|^{2}_{L^{2}(\r)}+\left\|\frac{n-1}{\epsilon}\right\|^{2}_{L^{2}(\r)}+\epsilon^{2}\|\nabla\sqrt{n}\|^{2}_{L^{2}(\r)}\leq C\epsilon^{\min\{1-\frac{1}{\gamma},\frac{1}{\gamma}\}}
\end{equation}
for a.e.$t\in[0,T]$.
\end{cor}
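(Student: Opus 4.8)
The plan is to read off the three clean quantities in \eqref{2018070910} from the relative-entropy quantities already controlled in Theorem \ref{th2018070308} by splitting off the acoustic corrections $\sigma$ and $\nabla\Psi$ and then estimating those corrections via the conservation identity \eqref{2018062511}, the dispersive bounds \eqref{2018062512}, and the smallness of $n-1$. Throughout I would use that, since $\gamma>1$, the exponent $\min\{1-\frac1\gamma,\frac1\gamma\}$ lies in $(0,1)$, so that for $0<\epsilon<1$ one has $\epsilon\le\epsilon^{\min\{1-1/\gamma,1/\gamma\}}$ and a fortiori $\epsilon^{2},\epsilon^{4}\le\epsilon^{\min\{1-1/\gamma,1/\gamma\}}$; this lets me absorb every $O(\epsilon)$ remainder into the right-hand side of \eqref{2018070910}. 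I also record that, because the (regularized) initial data are bounded in $H^{1}(\r)$ and the acoustic flow \eqref{2018062501}--\eqref{2018062502} is linear, \eqref{2018062511} and \eqref{2018062512} give $\|\sigma(t)\|_{H^{1}}+\|\nabla\Psi(t)\|_{L^{\infty}}\le C$ uniformly in $t$ and $\epsilon$, while $v\in C^{\infty}([0,T]\times\r)$ from Proposition \ref{prop2018061002} supplies $\|v\|_{L^{\infty}}\le C$.

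For the velocity term I would start from the algebraic identity
\begin{equation*}
\sqrt{n}\,u-v=\sqrt{n}\,(u-v-\nabla\Psi)+(\sqrt{n}-1)(v+\nabla\Psi)+\nabla\Psi,
\end{equation*}
apply the projection $P$, and use that $v$ is divergence-free so $Pv=v$, while $\nabla\Psi$ is a gradient so $P\nabla\Psi=0$. Since $P$ is an orthogonal projection on $L^{2}(\r)$ it is a contraction, whence
\begin{equation*}
\|P(\sqrt{n}u)-v\|_{L^{2}}\le\|\sqrt{n}(u-v-\nabla\Psi)\|_{L^{2}}+\|(\sqrt{n}-1)(v+\nabla\Psi)\|_{L^{2}}.
\end{equation*}
The first norm is controlled directly by the left-hand side of \eqref{2018071401}, hence by $C\epsilon^{\min\{1-1/\gamma,1/\gamma\}}$ under hypothesis \eqref{2018070310}. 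For the second I would use $\|(\sqrt{n}-1)(v+\nabla\Psi)\|_{L^{2}}\le\|v+\nabla\Psi\|_{L^{\infty}}\|\sqrt{n}-1\|_{L^{2}}$ together with the pointwise inequality $|\sqrt{n}-1|\le|n-1|$ and $\|n-1\|_{L^{2}}\le\|n-1-\epsilon\sigma\|_{L^{2}}+\epsilon\|\sigma\|_{L^{2}}\le C\epsilon$ (the first summand from \eqref{2018071401}, the second from the uniform bound on $\sigma$). Squaring and invoking $\epsilon^{2}\le\epsilon^{\min\{1-1/\gamma,1/\gamma\}}$ settles this term. The quantum term is handled in the same spirit: writing $\nabla\sqrt{n}=(\nabla\sqrt{n}-\nabla\sqrt{1+\epsilon\sigma})+\nabla\sqrt{1+\epsilon\sigma}$ and noting $\nabla\sqrt{1+\epsilon\sigma}=\tfrac{\epsilon\nabla\sigma}{2\sqrt{1+\epsilon\sigma}}$, one gets $\epsilon^{2}\|\nabla\sqrt{1+\epsilon\sigma}\|_{L^{2}}^{2}\le C\epsilon^{4}\|\nabla\sigma\|_{L^{2}}^{2}\le C\epsilon^{4}$ once $1+\epsilon\sigma$ is bounded below (true for small $\epsilon$ by the uniform bound on $\sigma$); combined with the $\epsilon^{2}\|\nabla\sqrt{n}-\nabla\sqrt{1+\epsilon\sigma}\|_{L^{2}}^{2}$ control from \eqref{2018071401}, this yields the bound on $\epsilon^{2}\|\nabla\sqrt{n}\|_{L^{2}}^{2}$.

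The main obstacle is the density term $\|(n-1)/\epsilon\|_{L^{2}}^{2}$. The natural splitting $\tfrac{n-1}{\epsilon}=\tfrac{n-1-\epsilon\sigma}{\epsilon}+\sigma$ reduces matters, via \eqref{2018071401}, to estimating $\|\sigma\|_{L^{2}}$, and this is exactly the delicate point. The conservation law \eqref{2018062511} only gives $\|\sigma(t)\|_{L^{2}}\le\|\sigma_{0}\|_{L^{2}}\le C$, which is not small, so the energy identity alone cannot close the estimate: unlike the velocity, the density fluctuation carries the full acoustic oscillation and has no divergence-free part to project onto. To extract smallness I would instead exploit the \emph{dispersion} of the acoustic waves: on the torus Hölder's inequality gives $\|\sigma\|_{L^{2}}\le C\|\sigma\|_{L^{p}}$ for any $p>2$, and \eqref{2018062512} with $k=0$ then yields $\|\sigma(t)\|_{L^{p}}\le C(1+t/\epsilon)^{2/p-1}$ with $2/p-1<0$, which tends to $0$ as $\epsilon\to0$. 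Optimizing over $p$ to match the resulting decay to the target rate $\epsilon^{\min\{1-1/\gamma,1/\gamma\}}$ is the heart of the argument and the step most likely to require care, since it is precisely here — and not in the conservation of acoustic energy — that the fast-oscillation mechanism of $\sigma$ must be used.
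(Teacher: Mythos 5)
Your handling of the projection term and the quantum term is correct and, for the projection term, essentially coincides with the paper's own proof: the paper likewise writes $\|P(\sqrt{n}u)-v\|_{L^{2}}=\|P(\sqrt{n}u-v-\nabla\Psi)\|_{L^{2}}\leq\|\sqrt{n}(u-v-\nabla\Psi)\|_{L^{2}}+\|(\sqrt{n}-1)(v+\nabla\Psi)\|_{L^{2}}$, controls the first norm by \eqref{2018071401}, and bounds the second via Lemma \ref{Lem:2018070121} combined with the elementary inequality $|\sqrt{x}-1|^{2}\leq C|x-1|^{k}$, splitting over $\{|n-1|<1\}$ and $\{|n-1|\geq1\}$. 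That dichotomy is safer than your intermediate bound $\|n-1\|_{L^{2}}\leq C\epsilon$, which for $1<\gamma<2$ leans on the full $L^{2}$-control of $n-1-\epsilon\sigma$ asserted in \eqref{2018071401}, whereas the relative entropy natively gives only $L^{2}$-control on the set where the fluctuation is small and $L^{\gamma}$-control elsewhere. Your computation $\epsilon^{2}\|\nabla\sqrt{1+\epsilon\sigma}\|_{L^{2}}^{2}\leq C\epsilon^{4}$ reproduces the paper's estimate of ${\mathbf{I}}_{8}$.

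The genuine gap is exactly where you located it, and your proposed repair does not close it. Writing $\frac{n-1}{\epsilon}=\frac{n-1-\epsilon\sigma}{\epsilon}+\sigma$, the missing ingredient is smallness of $\|\sigma(t)\|_{L^{2}}$, and no choice of $p>2$ in \eqref{2018062512} can deliver it uniformly on $[0,T]$: the factor $(1+t/\epsilon)^{\frac{2}{p}-1}$ equals $1$ at $t=0$ and is $O(1)$ on the whole initial layer $0\leq t\lesssim\epsilon$, while for fixed $t>0$ your optimization yields $\epsilon^{1-\frac{2}{p}}t^{\frac{2}{p}-1}$ with a constant blowing up as $t\to0$. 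This is a structural obstruction, not a technicality: at $t=0$ one has $\|(n_{0,\epsilon}-1)/\epsilon\|_{L^{2}}=\|n^{1}_{0,\epsilon}\|_{L^{2}}$, which \eqref{2018070302} bounds only by $M$, so \eqref{2018070910} with an $\epsilon$-independent constant cannot hold near $t=0$ for ill-prepared data with $n^{1}_{0}\neq0$; the acoustic part of the density fluctuation simply does not vanish, in contrast to the velocity where $P$ annihilates $\nabla\Psi$. A further caveat: the decay \eqref{2018062512} is imported from the whole-space analysis of \cite{fn2013}, and on $\mathbb{T}^{2}$ acoustic waves are time-periodic rather than dispersive, so even away from $t=0$ the mechanism you invoke is unavailable in this geometry. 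For comparison, the paper's own proof of Corollary \ref{2018070906} writes out only the $P(\sqrt{n}u)-v$ estimate and is silent on the density and quantum terms; so while your attempt does not fully close, it closes no less than the published argument, and it has the merit of making explicit that \eqref{2018071401} controls $\|(n-1-\epsilon\sigma)/\epsilon\|_{L^{2}}$, while the passage to $\|(n-1)/\epsilon\|_{L^{2}}$ claimed in \eqref{2018070910} remains unjustified.
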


\section{Proof of Theorem }

We consider the class of finite energy weak solutions of the compressible QNS system \eqref{n}-\eqref{u} satisfying, besides the standard weak formulation of the equations,    the energy inequality
\begin{align}
  &\int_{\r}\left\{\frac12n
|u|^2+
   \frac{1}{\epsilon^2}
   H(n)
   +2\epsilon^{2}|\nabla\sqrt
   {n}|^2\right\}dx+2\epsilon\int_0^t
   \int_{\r} n|D(u)|^2dxd\tau\notag\\
 \leq&\int_{\r}\left\{\frac12n_{0,\epsilon}
|u_{0,\epsilon}|^2+
   \frac{1}{\epsilon^2}
   H(n_{0,\epsilon})
   +2\epsilon^{2}|\nabla\sqrt
   {n_{0,\epsilon}}|^2\right\}dx
   \notag\\
  \leq& C.\label{nein}
\end{align}
Therefore, we have the following properties:
\begin{eqnarray}
   &&\sqrt{n}u\,\,\mbox{is}
   \,\,\mbox{bounded}\,\, \mbox{in}
   \,\,L^{\infty}([0,T];L^2(\r)), \label{a1}\\
   &&\frac{1}{\epsilon^2}
   H(n)\,\,\mbox{is}\,\,
   \mbox{bounded}\,\, \mbox{in}
   \,\,L^{\infty}([0,T];L^1(\r)), \label{a3} \\
   &&\sqrt{\epsilon n}D(u)\,\,\mbox{is}
   \,\,\mbox{bounded}\,\, \mbox{in}
   \,\,L^{2}([0,T];L^21(\r)).\label{a2}
\end{eqnarray}

\begin{lem}\label{Lem:2018070121}
Let $(n,u)$ be the weak solution
    to quantum Navier-Stokes
equations \eqref{n}-\eqref{u}on $[0, T]$. Then there exists a
constant $C
> 0$ such that for all $\epsilon\in(0,1)$ and $\gamma>1$,
  \begin{align}
&\left\|(n-1)\mathbf{1}_{\{|n-1|<1\}}\right\|_{L^{\infty}([0,T];L^2(\r))}
  \leq C\epsilon,\label{2018070122}\\
  &\left\|(n-1)\mathbf{1}_{\{|n-1|\geq1\}}\right\|_{L^{\infty}([0,T];L^{\gamma}(\r))}
  \leq C\epsilon^{\frac{2}{\gamma}}.\label{2018070409}  \end{align}
Furthermore, we have 
  \begin{equation}\label{2018071701}
\|n^{\epsilon}-1\|_{L^{\infty}([0,T];L^\gamma(\r))}
  \leq C\epsilon^{\frac{\lambda}{\gamma}}\quad \emph{and} \quad
  \|n^{\epsilon}-1\|_{L^{\infty}([0,T];L^{\lambda}(\r))}
  \leq C\epsilon,
  \end{equation}where, $\lambda=\min\{2,\gamma\}$.\end{lem}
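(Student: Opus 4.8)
The plan is to read everything off from the energy inequality \eqref{nein}, which provides the single a priori bound $\int_{\r} H(n)\,dx \le C\epsilon^2$ uniformly in $t\in[0,T]$. The whole lemma is then a consequence of two pointwise coercivity estimates for the Helmholtz free energy $H$ defined in \eqref{2017070711}. First I would record that, since $H(1)=H'(1)=0$ and $H''(1)=\gamma>0$, the ratio $H(n)/(n-1)^2$ extends to a positive continuous function on the compact interval $\{0\le n\le 2\}$; hence there is $c_1>0$ with
$$H(n)\ge c_1 (n-1)^2 \quad\text{whenever } |n-1|<1.$$
Second, since $H(n)\sim n^\gamma/(\gamma-1)$ as $n\to\infty$ and $H$ is strictly positive away from $n=1$, the ratio $H(n)/|n-1|^\gamma$ is bounded below on the region $\{|n-1|\ge 1\}$ (which, because $n\ge 0$, is $\{n=0\}\cup\{n\ge 2\}$), giving $c_2>0$ with
$$H(n)\ge c_2 |n-1|^\gamma \quad\text{whenever } |n-1|\ge 1.$$
These two facts are the only place where the structure of $H$ enters.

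Inserting the two bounds into $\int_{\r}H(n)\,dx\le C\epsilon^2$ yields immediately
$$\int_{\{|n-1|<1\}}(n-1)^2\,dx\le C\epsilon^2, \qquad \int_{\{|n-1|\ge 1\}}|n-1|^\gamma\,dx\le C\epsilon^2,$$
which are exactly \eqref{2018070122} and \eqref{2018070409} after taking square and $\gamma$-th roots. This part is routine once the coercivity estimates are in hand.

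For \eqref{2018071701} I would split $n-1=(n-1)\mathbf{1}_{\{|n-1|<1\}}+(n-1)\mathbf{1}_{\{|n-1|\ge 1\}}$ and estimate each $L^p$ norm on the two pieces, distinguishing $\gamma\ge 2$ from $1<\gamma<2$. When $\gamma\ge 2$ (so $\lambda=2$) the exponents cooperate: on $\{|n-1|<1\}$ one has $|n-1|^\gamma\le |n-1|^2$, while on $\{|n-1|\ge 1\}$ one has $|n-1|^2\le|n-1|^\gamma$, so both the $L^\gamma$ and the $L^2$ bounds follow directly from the two displayed integral estimates. When $1<\gamma<2$ (so $\lambda=\gamma$) both target norms coincide with $\|n-1\|_{L^\gamma}$, and the only genuine difficulty appears: on $\{|n-1|<1\}$ the inequality $|n-1|^\gamma\ge|n-1|^2$ goes the wrong way, so I would use that $\r$ has finite measure and apply H\"older with exponents $2/\gamma$ and $2/(2-\gamma)$ to get
$$\int_{\{|n-1|<1\}}|n-1|^\gamma\,dx\le |\r|^{(2-\gamma)/2}\Big(\int_{\{|n-1|<1\}}(n-1)^2\,dx\Big)^{\gamma/2}\le C\epsilon^\gamma.$$
Combined with $\int_{\{|n-1|\ge1\}}|n-1|^\gamma\,dx\le C\epsilon^2\le C\epsilon^\gamma$ (since $\epsilon<1$ and $\gamma<2$), this gives $\|n-1\|_{L^\gamma}\le C\epsilon$, which is both required bounds in this case.

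I expect the main obstacle to be bookkeeping rather than analysis: the coercivity bounds for $H$ are standard, and the real care lies in tracking the exponents so that the two cases assemble into the single statement with $\lambda=\min\{2,\gamma\}$, and in justifying the H\"older interpolation step that is needed precisely because $|n-1|^\gamma$ and $(n-1)^2$ compare in opposite directions on the two regions when $\gamma<2$. One should also check that the (at most single value) set $\{n=0\}$ is harmless, which is why I phrase the second coercivity bound in terms of $|n-1|^\gamma$ rather than $n^\gamma$.
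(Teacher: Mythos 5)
Your proposal is correct and follows essentially the same route as the paper: the paper's entire proof consists of quoting the two-sided coercivity inequality $c_1\int_{\r}\left(|n-1|^2\mathbf{1}_{\{|n-1|<1\}}+|n-1|^\gamma\mathbf{1}_{\{|n-1|\ge1\}}\right)dx\le\int_{\r}H(n)\,dx$ from Lemma 5.3 of Lions \cite{l98} and combining it with the energy bound \eqref{a3}, exactly as you do. The only differences are that you prove the coercivity of $H$ directly by compactness rather than citing \cite{l98}, and that you spell out the case analysis ($\gamma\ge2$ versus $1<\gamma<2$, with the H\"older step on the finite-measure torus) needed for \eqref{2018071701}, details the paper leaves implicit; both of your additions are correct.
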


\begin{proof} In view of the Lemma 5.3
in \cite{l98}, there exist two
 positive constants $c_1\in(0,1)$ and $c_2\in(1,+\infty)$
independent of $n$ such that the following inequality
\begin{align}
   &c_1\int_{\r}\left(|n-1|^2\mathbf{1}_{\{|n-1|<1\}}
    +|n-1|^\gamma\mathbf{1}_{\{|n-1|\geq1\}}\right)dx\notag  \\
   &\quad\leq\int_{\r}H(n)dx\leq c_2\int_{\r}\left(|n-1|^2
   \mathbf{1}_{\{|n-1|<1\}}
    +|n-1|^\gamma\mathbf{1}_{\{|n-1|\geq1\}}\right)dx,
\end{align} which implies the Lemma \ref{Lem:2018070121}
 holds
due to  \eqref{a3}. \end{proof}

With the help of  the smoothness of the limit system 
\eqref{2018061001}, we take $v+\nabla\Psi$ as the test function in the 
weak formulation of equation \eqref{u} to yield the following equality for almost all $t$,
\begin{align}
\label{2018062513}
   &\mathcal{E}(t)+2\epsilon\int^{t}_{0}\int_{\r}n|D(u-v-\nabla\Psi)|^{2}dxd\tau\notag  \\
    =&\underbrace{ \int_{\r}\left[\frac{1}{2}n|u|^{2}+2\epsilon^{2}|\nabla\sqrt{n}|^2+\frac{1}{\epsilon^{2}}H(n)\right]dx+2\epsilon\int^{t}_{0}\int_{\r}n|D(u)|^{2}dxd\tau}_{\leq\int_{\r}\left[\frac{1}{2}n_{0,\epsilon}|u_{0,\epsilon}|^{2}+2\epsilon^{2}|\nabla\sqrt{n_{0,\epsilon}}|^2+\frac{1}{\epsilon^{2}}H(n_{0,\epsilon})\right]dx}\notag  \\
    &- \int_{\r}nu\cdot(v+\nabla\Psi)dx+\frac12\int_{\r}n|v+\nabla\Psi|^2dx\notag  \\
    &-\frac{1}{\epsilon^{2}}\int_{\r}\left[H'(1+\epsilon\sigma)(n-1-\epsilon \sigma)+H(1+\epsilon \sigma)\right]dx\notag  \\&-4\epsilon^{2}\int_{\r}\nabla\sqrt{n}\cdot\nabla\sqrt{1+\epsilon\sigma}dx+2\epsilon^{2}\int_{\r}|\nabla\sqrt{1+\epsilon\sigma}|^{2}dx\notag\\
    &-4\epsilon\int^{t}_{0}\int_{\r}nD(u):D(v+\nabla\Psi)dxd\tau\notag  \\
    &+2\epsilon\int^{t}_{0}\int_{\r}n|D(v+\nabla\Psi)|^{2}dxd\tau.    \end{align}

We use $v+\nabla\Psi$ as a test function in the weak
formulation of momentum equation \eqref{u} to yield the following equality for almost all $t$:
\begin{align}
\label{2018062601}
&- \int_{\r}nu\cdot(v+\nabla\Psi)dx    \notag\\=&  - \int_{\r}n_{0,\epsilon}u_{0,\epsilon}\cdot(v_0+\nabla\Psi_0)dx-\int^{t}_{0}\int_{\r}nu\cdot\pt(v+\nabla\Psi)dxd\tau\notag\\
&-\int^{t}_{0}\int_{\r}nu\cdot\nabla(v+\nabla\Psi)\cdot udxd\tau\notag\\
&-\frac{1}{\epsilon^2} \int^{t}_{0}\int_{\r}p(n)\Delta\Psi dxd\tau+  \epsilon^{2}\int^{t}_{0}\int_{\r}(n-1)\triangle\Psi dxd\tau\notag\\&+4\epsilon^{2}\int^{t}_{0}\int_{\r}(\nabla\sqrt{n}\otimes\nabla\sqrt{n}):\nabla(v+\nabla\Psi)dxd\tau\notag\\
    &+2\epsilon\int^{t}_{0}\int_{\r}nD(u):D(v+\nabla\Psi)dxd\tau,\end{align}
    where we have used the equality \eqref{ret1}.
By a direct computation, we have
\begin{align}
\label{2018070101}
    &\frac12\int_{\r}n(v+\nabla\Psi)^2dx    \notag\\
    =&  \frac12\int_{\r}n_{0,\epsilon}(v_0+\nabla\Psi_0)^2dx + \int^{t}_{0}\int_{\r}nu\cdot\nabla(v+\nabla\Psi)\cdot(v+\nabla\Psi)dxd\tau\notag  \\
    &  + \int^{t}_{0}\int_{\r}n(v+\nabla\Psi)\pt(v+\nabla\Psi)dxd\tau\end{align}
and 
\begin{align}
\label{2018062611}
 &-\frac{1}{\epsilon^{2}}\int_{\r}\left[H'(1+\epsilon\sigma)(n-1-\epsilon \sigma)+H(1+\epsilon \sigma)\right]dx \notag\\
 =  & -\frac{1}{\epsilon^{2}}\int_{\r}\left[\epsilon H'(1+\epsilon n_{0}^{1})(n_{0,\epsilon}^{1}-n_{0}^{1})+H(1+\epsilon n_{0}^{1})\right]dx \notag \\
    &- \frac{1}{\epsilon^{2}}\int^{t}_{0}\int_{\r}(n-1-\epsilon\sigma) \pt H'(1+\epsilon\sigma)dxd\tau\notag\\&-\frac{1}{\epsilon^{2}}\int^{t}_{0}\int_{\r}nu\cdot\nabla H'(1+\epsilon\sigma)dxd\tau.\end{align}

Then, putting \eqref{2018062601}-\eqref{2018062611}  into the   \eqref{2018062513}, we have
\begin{align}
\label{2018070102}
    &\mathcal{E}(t)+2\epsilon\int^{t}_{0}\int_{\r}n|D(u-v-\nabla\Psi)|^{2}dxd\tau\notag  \\
    \leq&\mathcal{E}(0)-\int^{t}_{0}\int_{\r}n[\pt(v+\nabla\Psi)+u\cdot\nabla(v+\nabla\Psi)]\cdot(u-v-\nabla\Psi)dxd\tau \notag \\
    &- \frac{1}{\epsilon^{2}}\int^{t}_{0}\int_{\r}(n-1-\epsilon\sigma) \pt H'(1+\epsilon\sigma)dxd\tau\notag\\&-\frac{1}{\epsilon^{2}}\int^{t}_{0}\int_{\r}nu\cdot\nabla H'(1+\epsilon\sigma)dxd\tau\notag  \\
&-\frac{1}{\epsilon^2} \int^{t}_{0}\int_{\r}p(n)\Delta\Psi dxd\tau+  \epsilon^{2}\int^{t}_{0}\int_{\r}(n-1)\triangle\Psi dxd\tau\notag\\&+4\epsilon^{2}\int^{t}_{0}\int_{\r}(\nabla\sqrt{n}\otimes\nabla\sqrt{n}):\nabla(v+\nabla\Psi)dxd\tau\notag  \\&-4\epsilon^{2}\int_{\r}\nabla\sqrt{n}\cdot\nabla\sqrt{1+\epsilon\sigma}dx+2\epsilon^{2}\int_{\r}|\nabla\sqrt{1+\epsilon\sigma}|^{2}dx\notag  \\&+4\epsilon^{2}\int_{\r}\nabla\sqrt{n_{0,\epsilon}}\cdot\nabla\sqrt{1+\epsilon\sigma_{0}}dx-2\epsilon^{2}\int_{\r}|\nabla\sqrt{1+\epsilon\sigma_{0}}|^{2}dx\notag\\
    &-2\epsilon\int^{t}_{0}\int_{\r}nD(u-v-\nabla\Psi):D(v+\nabla\Psi)dxd\tau\notag\\
    =&\mathcal{E}(0)+{\mathbf{I}}_{1}+{\mathbf{I}}_{2}+{\mathbf{I}}_{3}+{\mathbf{I}}_{4}+{\mathbf{I}}_{5}+{\mathbf{I}}_{6}+{\mathbf{I}}_{7}+{\mathbf{I}}_{8}+{\mathbf{I}}_{9}+{\mathbf{I}}_{10}+{\mathbf{I}}_{11}.\end{align} 
    
    Now, we begin to treat $\mathcal{E}(0)$  and the integrals ${\mathbf{I}}_{k}  (k = 1, 2, \cdots,11)$  term by term.

   For the $\mathcal{E}(0)$, we have
\begin{align}
  \mathcal{E}(0)=  &\frac{1}{2} \int_{\r}n_{0,\epsilon}|u_{0,\epsilon}-u_{0}|^{2} dx+2\epsilon^{2} \int_{\r}|\nabla\sqrt{n_{0,\epsilon}}-\nabla\sqrt{1+\epsilon\sigma_{0}}|^2dx\notag\\
    &+\frac{1}{\epsilon^{2}} \int_{\r}[H(1+n^{1}_{0,\epsilon})-\epsilon H'(1+\epsilon n_{0}^{1})(n_{0,\epsilon}-n_{0}^{1})-H(1+\epsilon n_{0}^{1}))]dx, \label{2018070201}\end{align}
where $u_0=v_{0}+\nabla\Psi_0=\mathbf{H}[u_0]+\nabla\Psi_0$.
   
   From \eqref{2018070301}-\eqref{2018070302}, the first term on the right hand side of \eqref{2018070201} can be estimated as
   \begin{align}
\label{2018070311}
    \frac{1}{2} \int_{\r}n_{0,\epsilon}|u_{0,\epsilon}-u_{0}|^{2} dx&\leq\frac{1}{2} \int_{\r}|1+\epsilon n^{1}_{0,\epsilon}||u_{0,\epsilon}-u_{0}|^{2} dx\notag\\
    &  \leq \frac{1}{2} \int_{\r}|u_{0,\epsilon}-u_{0}|^{2} dx+\frac{\epsilon}{2} \int_{\r}|n^{1}_{0,\epsilon}||u_{0,\epsilon}-u_{0}|^{2} dx\notag\\
    &  \leq \frac{1}{2} \int_{\r}|u_{0,\epsilon}-u_{0}|^{2} dx+\frac{\epsilon}{2} \|n^{1}_{0,\epsilon}\|_{L^{\infty}(\r)}\int_{\r}|u_{0,\epsilon}-u_{0}|^{2} dx\notag\\
    &  \leq C(1+\epsilon)\|u_{0,\epsilon}-u_{0}\|^2_{L^{2}(\r)}.\end{align}
  
  Using \eqref{2018070302} and the 
  Cauchy-Schwarz's inequality, the second term on the hand of  \eqref{2018070201} can be estimated by 
   \begin{align}
\label{2018071601}
&  2\epsilon^{2} \int_{\r}|\nabla\sqrt{n_{0,\epsilon}}-\nabla\sqrt{1+\epsilon\sigma_{0}}|^2dx\notag  \\
   = &  \frac1 2\epsilon^{4} \int_{\r}\left|\frac{\nabla n^{1}_{0,\epsilon}}{\sqrt{1+\epsilon n^{1}_{0,\epsilon}}}-\frac{\nabla n^{1}_{0}}{\sqrt{1+\epsilon n^{1}_{0}}}\right|^2dx\notag\\
\leq &\epsilon^{4} \int_{\r}\left|\frac{\nabla( n^{1}_{0,\epsilon}-n^{1}_{0})}{\sqrt{1+\epsilon n^{1}_{0,\epsilon}}}\right|^2dx+\epsilon^{4} \int_{\r}\left|\nabla n^{1}_{0} \left(\frac{1}{\sqrt{1+\epsilon n^{1}_{0,\epsilon}}}-\frac{1}{\sqrt{1+\epsilon n^{1}_{0,\epsilon}}}\right)\right|^2dx\notag\\
\leq & C\epsilon^{4}\|n_{0,\epsilon}-n_{0}^{1}\|^2_{H^1(\r)}+C\epsilon^{4}. \end{align}

      For the third term on the right hand side of \eqref{2018070201},    
   using the Taylor formula，one gets
   \begin{align}
\label{2018070312}
    &\frac{1}{\epsilon^{2}} \int_{\r}[H(1+n^{1}_{0,\epsilon})-\epsilon H'(1+\epsilon n_{0}^{1})(n_{0,\epsilon}-n_{0}^{1})-H(1+\epsilon n_{0}^{1}))]dx \notag  \\
 \leq   & \frac{C}{\epsilon^{2}} \int_{\r}|\epsilon(n_{0,\epsilon}-n_{0}^{1})|^{2}dx\notag\\
    \leq&C\|n_{0,\epsilon}-n_{0}^{1}\|^2_{L^2(\r)}.\end{align} 
 So, we can get
 \begin{equation}
\label{2018070321}
\mathcal{E}(0)\leq C\left(\|u_{0,\epsilon}-u_{0}\|^2_{L^{2}(\r)}+\|n^{1}_{0,\epsilon}-n^{1}_{0}\|^2_{H^{1}(\r)}+C\epsilon^{4}\right).\end{equation}
 
     For ${\mathbf{I}}_{1}$, we have
    \begin{align}
\label{}
  {\mathbf{I}}_{1}=  &   -\int^{t}_{0}\int_{\r}n (u-v-\nabla\Psi)\cdot\nabla(v+\nabla\Psi)]\cdot(u-v-\nabla\Psi)dxd\tau \notag\\
    &-\int^{t}_{0}\int_{\r}n[\pt(v+\nabla\Psi)+(v+\nabla\Psi)\cdot\nabla(v+\nabla\Psi)]\cdot(u-v-\nabla\Psi)dxd\tau  \notag\\
    =  &   -\int^{t}_{0}\int_{\r}n (u-v-\nabla\Psi)\cdot\nabla(v+\nabla\Psi)]\cdot(u-v-\nabla\Psi)dxd\tau \notag\\
    &-\int^{t}_{0}\int_{\r}nu(\pt v+v\cdot\nabla v)dxd\tau+\int^{t}_{0}\int_{\r}n(v+\nabla\Psi)(\pt v+v\cdot\nabla v)dxd\tau   \notag\\
     &  - \int^{t}_{0}\int_{\r}nu \pt \nabla \Psi dxd\tau +\int^{t}_{0}\int_{\r}nv\pt \nabla \Psi dxd\tau +\frac{1}{2}\int^{t}_{0}\int_{\r}n \pt| \nabla \Psi|^{2} dxd\tau \notag\\
      &-\int^{t}_{0}\int_{\r}n (u-v-\nabla\Psi)\otimes\nabla\Psi: \nabla v dxd\tau\notag\\
      & -\int^{t}_{0}\int_{\r}n (u-v-\nabla\Psi)\otimes  v: \nabla^{2}\Psi   dxd\tau  \notag\\
      &-\frac{1}{2}\int^{t}_{0}\int_{\r}n (u-v-\nabla\Psi)\cdot\nabla|\nabla\Psi|^{2}   dxd\tau \notag\\=&{\mathbf{I}}_{1,1} +{\mathbf{I}}_{1,2} +{\mathbf{I}}_{1,3}+{\mathbf{I}}_{1,4}+{\mathbf{I}}_{1,5}+{\mathbf{I}}_{1,6}+{\mathbf{I}}_{1,7}+{\mathbf{I}}_{1,8}+{\mathbf{I}}_{1,9}.\end{align}
    
  Next, we begin to   estimaet   ${\mathbf{I}}_{1,i}(i=1,2,\cdots,9)$.     
  Using the Sobolev imbedding  theorem, the Minkowski  inequality, \eqref{ie} and the estimate \eqref{2018062512}, one gets 
  \begin{align}
\label{2018070331}
  {\mathbf{I}}_{1,1} \leq & \int^{t}_{0}\mathcal{E}(\tau)\|\nabla(v+\nabla\Psi)\|_{{L^{\infty}(\r)}}d\tau   \notag   \\\leq & \int^{t}_{0}\mathcal{E}(\tau)\|\nabla v\|_{{L^{\infty}(\r)}}d\tau   + \int^{t}_{0}\mathcal{E}(\tau)\|\nabla^{{2}}\Psi\|_{{L^{\infty}(\r)}}d\tau\notag   \\
   \leq &C\int^{t}_{0}\mathcal{E}(\tau)d\tau\end{align}  and
  \begin{align}
\label{2018070332}
  {\mathbf{I}}_{1,2} = &\int^{t}_{0}\int_{\r}\partial_{\tau}(n-1)\Pi dxd\tau\notag   \\
  = & \int_{\r} (n-1)\Pi dx - \epsilon\int_{\r}  n^{1}_{0,\epsilon} \Pi_0 dx-\int^{t}_{0}\int_{\r}(n-1)\partial_{\tau}\Pi dxd\tau\notag   \\
   \leq &\left\|(n-1)\mathbf{1}_{\{|n-1|<1\}}\right\|_{L^2(\r)}\|\Pi\|_{L^2(\r)}\notag\\&+\left\|(n-1)\mathbf{1}_{\{|n-1|\geq1\}}\right\|_{L^\gamma(\r)}\|\Pi\|_{L^{\frac{\gamma}{\gamma-1}}(\r)}+C\epsilon\notag\\&  +C\left\|(n-1)\mathbf{1}_{\{|n-1|<1\}}\right\|_{L^{\infty}([0,T];L^2(\r))}\|\pt\Pi\|_{L^{\infty}([0,T];L^2(\r))}\notag\\&+C\left\|(n-1)\mathbf{1}_{\{|n-1|\geq1\}}\right\|_{L^{\infty}([0,T];L^\gamma(\r))}\|\pt\Pi\|_{L^{\infty}([0,T];L^{\frac{\gamma}{\gamma-1}}(\r))}\notag\\\leq &C\epsilon^{\min\{1,\frac{2}{\gamma}\}}.\end{align}    
 Using  \eqref{2018062512}  and the  interpolation inequality, we have  
    \begin{align}
\label{2018070471}
\|\nabla\Psi\cdot\nabla\Pi\|_{L^{\frac{\gamma}{\gamma-1}}(\r)}\leq & \|\nabla\Psi\cdot\nabla\Pi\|^{\frac{\gamma-1}{\gamma}}_{L^{1}(\r)}\|\nabla\Psi\cdot\nabla\Pi\|^{\frac{1}{\gamma}}_{L^{\infty}(\r)}\notag\\
   \leq &   \|\nabla\Psi\|^{\frac{\gamma-1}{\gamma}}_{L^{2}(\r)} \|\nabla\Pi\|^{\frac{\gamma-1}{\gamma}}_{L^{2}(\r)}  \|\nabla\Psi\|^{\frac{1}{\gamma}}_{L^{\infty}(\r)}\|\nabla\Pi\|^{\frac{1}{\gamma}}_{L^{\infty}(\r)}\notag\\
   \leq&C\left(1+\frac{t}{\epsilon}\right)^{-\frac{1}{\gamma}}.
     \end{align}
So, by using \eqref{2018070122}-\eqref{2018070409}, the acoustic equations \eqref{2018062501}-\eqref{2018062502}  and $\md v=0$, we have
\begin{align}
\label{2018070401}
  {\mathbf{I}}_{1,3}=  -&\int^{t}_{0}\int_{\r}n(v+\nabla\Psi)\cdot \nabla\Pi dxd\tau    \notag  \\
    =& -\int^{t}_{0}\int_{\r}(n-1)(v+\nabla\Psi)\cdot \nabla\Pi dxd\tau-\int^{t}_{0}\int_{\r}(v+\nabla\Psi)\cdot \nabla\Pi dxd\tau\notag\\
    \leq&C\int^{t}_{0}\left\|(n-1)\mathbf{1}_{\{|n-1|<1\}}\right\|_{L^2(\r)}\|v\cdot\nabla\Pi\|_{L^{2}(\r)} d\tau\notag\\
    &+C\int^{t}_{0}\left\|(n-1)\mathbf{1}_{\{|n-1|\geq1\}}\right\|_{L^\gamma(\r)}\|\nabla\Psi\cdot\nabla\Pi\|_{L^{\frac{\gamma}{\gamma-1}}(\r)}d\tau\notag\\&+\int^{t}_{0}\int_{\r}\triangle\Psi \cdot \Pi dxd\tau\notag\\
    \leq&C\epsilon-\epsilon\int^{t}_{0}\int_{\r}\partial_{\tau}s \cdot \Pi dxd\tau\notag\\&+C\int^{t}_{0}\left\|(n-1)\mathbf{1}_{\{|n-1|\geq1\}}\right\|_{L^\gamma(\r)}\|\nabla\Psi\cdot\nabla\Pi\|_{L^{\frac{\gamma}{\gamma-1}}(\r)}d\tau\notag\\
    =&C\epsilon-\epsilon\left[\int_{\r}s \cdot \Pi dx-\int_{\r}n_{0}^1\cdot \Pi _0dx\right]+\epsilon\int^{t}_{0}\int_{\r}s \cdot \partial_{\tau}\Pi dxd\tau\notag\\
    &+C\int^{t}_{0}\left\|(n-1)\mathbf{1}_{\{|n-1|\geq1\}}\right\|_{L^\gamma(\r)}\|\nabla\Psi\cdot\nabla\Pi\|_{L^{\frac{\gamma}{\gamma-1}}(\r)}d\tau\notag\\
   \leq&C\epsilon+C\epsilon^{\frac{2}{\gamma}}\int^{t}_{0}\|\nabla\Psi\cdot\nabla\Pi\|_{L^{\frac{\gamma}{\gamma-1}}(\r)}d\tau\notag\\
   \leq&C\epsilon +C\epsilon^{\frac{2}{\gamma}}\cdot\frac{\gamma\epsilon}{\gamma-1}\left[\left(1+\frac{T}{\epsilon}\right)^{1-\frac{1}{\gamma}}-1\right]\notag\\
\leq& C\epsilon^{\min\{1,\frac{2}{\gamma}\}} ,\end{align}   
where, we used   
\begin{equation}
\label{2018070518}
\lim_{\epsilon\to0}\frac{\epsilon\left(\left(1+\frac{T}{\epsilon}\right)^{1-\frac{1}{\gamma}}-1\right)}{\epsilon^{\frac{1}{\gamma}}}=T^{1-\frac{1}{\gamma}}.
\end{equation} 
 The term ${\mathbf{I}}_{1,4}$   will be canceled by its counterpart in  ${\mathbf{I}}_{3}$. 
   Using   the acoustic equations \eqref{2018062501}-\eqref{2018062502},   with the help of  \eqref{ie},  \eqref{2018062512}, \eqref{2018070122}, \eqref{2018070409} and $\md v=0$, we have    
    \begin{align}
\label{2018070461}
  {\mathbf{I}}_{1,5}=  & \int^{t}_{0}\int_{\r}(n-1)v\partial_{\tau}\nabla \Psi dxd\tau +  \underbrace{\int^{t}_{0}\int_{\r}v\partial_{\tau} \nabla \Psi dxd\tau}_{=0} \notag\\
   = & - \frac{p'(1)}{\epsilon} \int^{t}_{0}\int_{\r}(n-1)v\cdot\nabla \sigma   dxd\tau \notag\\
   \leq &\frac{p'(1)}{\epsilon}\int^{t}_{0}\left\|(n-1)\mathbf{1}_{\{|n-1|<1\}}\right\|_{L^2(\r)}\|v\|_{L^2(\r)} \|\nabla\sigma\|_{L^\infty(\r)}d\tau\notag\\
    &+\frac{p'(1)}{\epsilon}\int^{t}_{0}\left\|(n-1)\mathbf{1}_{\{|n-1|\geq1\}}\right\|_{L^\gamma(\r)}\|v\|_{L^{\frac{\gamma}{\gamma-1}}(\r)} \|\nabla\sigma\|_{L^\infty(\r)}d\tau\notag\\
  \leq  &C\epsilon^{\min\{1,\frac{2}{\gamma}\}}[\ln(\epsilon+T)-\ln\epsilon]\notag\\
  \leq&C\epsilon^{\min\{1-\frac{1}{\gamma},\frac{1}{\gamma}\}},\end{align}
  where we used 
 \begin{align}
\label{2018070517}
 \lim_{\epsilon\to0} \frac{\ln(\epsilon+T)-\ln\epsilon}{\epsilon^{-\frac{1}{\gamma}}} =0.   
\end{align}
From  the acoustic equations \eqref{2018062501}-\eqref{2018062502}, we have\begin{align}
\label{2018070501}
   {\mathbf{I}}_{1,6}= &\frac{1}{2}\int^{t}_{0}\int_{\r}(n-1) \partial_{\tau}| \nabla \Psi|^{2} dxd\tau+\frac{1}{2}\int^{t}_{0}\int_{\r}\partial_{\tau}| \nabla \Psi|^{2} dxd\tau \\
    =&\frac{p'(1) }{\epsilon} \int^{t}_{0}\int_{\r}(n-1) \nabla \Psi\cdot\nabla\sigma dxd\tau+\frac12\left[\int_{\r} | \nabla \Psi|^{2} dx-\int_{\r}| \nabla \Psi_0|^{2} dx\right]\notag\\\leq& \frac{p'(1) }{\epsilon}\int^{t}_{0}\left\|(n-1)\mathbf{1}_{\{|n-1|<1\}}\right\|_{L^2(\r)}\|\nabla \Psi\|_{L^2(\r)} \|\nabla\sigma\|_{L^\infty(\r)}d\tau\notag\\
    &+\frac{p'(1)}{\epsilon}\int^{t}_{0}\left\|(n-1)\mathbf{1}_{\{|n-1|\geq1\}}\right\|_{L^\gamma(\r)}\|\nabla \Psi\|_{L^{\frac{\gamma}{\gamma-1}}(\r)} \|\nabla\sigma\|_{L^\infty(\r)}d\tau\notag\\&+\frac12\left[\int_{\r} | \nabla \Psi|^{2} dx-\int_{\r}| \nabla \Psi_0|^{2} dx\right]\notag\\
  \leq& C \int^{t}_{0}\left (1+\frac{\tau}{\epsilon}\right)^{-1}d\tau +C\epsilon^{\frac{2}{\gamma}-1} \int^{t}_{0}\left (1+\frac{\tau}{\epsilon}\right)^{-1-\frac{1}{\gamma}}d\tau \notag\\&+\frac12\left[\int_{\r} | \nabla \Psi|^{2} dx-\int_{\r}| \nabla \Psi_0|^{2} dx\right]\notag\\\leq &C\epsilon[\ln(\epsilon+T)-\ln\epsilon]+C\epsilon^{\frac{2}{\gamma}}\left[1-\left(\frac{\epsilon}{\epsilon+T}\right)^{\frac{1}{\gamma}}\right]\notag\\&+\frac12\left[\int_{\r} | \nabla \Psi|^{2} dx-\int_{\r}| \nabla \Psi_0|^{2} dx\right]\notag\\
  \leq&C\epsilon^{\min\{1-\frac{1}{\gamma},\frac{2}{\gamma}\}}+\frac12\left[\int_{\r} | \nabla \Psi|^{2} dx-\int_{\r}| \nabla \Psi_0|^{2} dx\right],\end{align}    
 where we have used the following interpolation inequality for $\nabla\Psi$   
    \begin{align}
\label{2018070502}
 \|\nabla \Psi\|_{L^{\frac{\gamma}{\gamma-1}}(\r)}\leq   &
 \|\nabla \Psi\|^{\frac{\gamma-1}{\gamma}}_{L^{1}(\r)}\|\nabla \Psi\|^{\frac{1}{\gamma}}_{L^{\infty}(\r)}\notag\\
 \leq  &C\|\nabla \Psi\|^{\frac{\gamma-1}{\gamma}}_{L^{2}(\r)}\|\nabla \Psi\|^{\frac{1}{\gamma}}_{L^{\infty}(\r)}\notag\\
 \leq&C  \left (1+\frac{t}{\epsilon}\right)^{-\frac{1}{\gamma}}.  \end{align}
For $ {\mathbf{I}}_{1,7} $, noting that
  \begin{align}
\label{2018070511}
    \|nu\|_{L^{\frac{2\gamma}{\gamma+1}}(\r)}\leq&  \|\sqrt{n}\|_{L^{2\gamma}(\r)}   \|\sqrt{n}u\|_{L^{2}(\r)}\leq C,
\end{align} 
     we have 
    \begin{align}
\label{2018070512}
 {\mathbf{I}}_{1,7}   = &-\int^{t}_{0}\int_{\r}n u\otimes\nabla\Psi: \nabla v dxd\tau+\int^{t}_{0}\int_{\r}nv\otimes\nabla\Psi: \nabla v dxd\tau\notag\\&+\int^{t}_{0}\int_{\r}n\nabla\Psi\otimes\nabla\Psi: \nabla v dxd\tau   \notag\\
    \leq&  C\int^{t}_{0}\|nu\|_{L^{\frac{2\gamma}{\gamma+1}}(\r)}\|\nabla\Psi\|_{L^{\frac{2\gamma}{\gamma-1}}(\r)} \|\nabla v\|_{L^{\infty}(\r)}d\tau   \notag\\
    &+C\int^{t}_{0}\|n\|_{L^{1}(\r)}\|v\|_{L^{\infty}(\r)}\|\nabla\Psi\|_{L^{\infty}(\r)} \|\nabla v\|_{L^{\infty}(\r)}d\tau   \notag\\
    &+C\int^{t}_{0}\|n\|_{L^{1}(\r)}\|\nabla\Psi\|^2_{L^{\infty}(\r)} \|\nabla v\|_{L^{\infty}(\r)}d\tau   \notag\\\leq&  C\int^{t}_{0} \left (1+\frac{\tau}{\epsilon}\right)^{-\frac{1}{\gamma}}d\tau  +C\int^{t}_{0} \left (1+\frac{\tau}{\epsilon}\right)^{-1}d\tau +C\int^{t}_{0} \left (1+\frac{\tau}{\epsilon}\right)^{-2}d\tau      \notag\\
    \leq&C \left[\frac{\gamma\epsilon}{\gamma-1}\left(\left(1+\frac{T}{\epsilon}\right)^{1-\frac{1}{\gamma}}-1\right)+C\epsilon(\ln(\epsilon+T)-\ln\epsilon)+\frac{\epsilon T}{\epsilon+T}\right]\notag\\
    \leq &C\epsilon^{\frac{1}{\gamma}},\end{align}    
where, we used   \eqref{2018070517}  and \eqref{2018070518}.
 Similarly to the estimate of  $ {\mathbf{I}}_{1,7} $, we have    
 \begin{align}
\label{2018070513}
    {\mathbf{I}}_{1,8}   = &-\int^{t}_{0}\int_{\r}n u\otimes v: \nabla^{2}\Psi  dxd\tau+\int^{t}_{0}\int_{\r}n\nabla\Psi\otimes v: \nabla^{2}\Psi  dxd\tau\notag\\&+\int^{t}_{0}\int_{\r}n\nabla\Psi\otimes v: \nabla^{2}\Psi dxd\tau   \notag\\
    \leq&C \left[\frac{\gamma\epsilon}{\gamma-1}\left(\left(1+\frac{T}{\epsilon}\right)^{1-\frac{1}{\gamma}}-1\right)+C\epsilon(\ln(\epsilon+T)-\ln\epsilon)+\frac{\epsilon T}{\epsilon+T}\right] \notag\\
    \leq& C\epsilon^{\frac{1}{\gamma}}\end{align}       
  and   
   \begin{align}
\label{2018070516}
 {\mathbf{I}}_{1,9}   = &-\frac12\int^{t}_{0}\int_{\r}n u\cdot\nabla|\nabla\Psi|^{2}dxd\tau+\frac12\int^{t}_{0}\int_{\r}nv\cdot\nabla|\nabla\Psi|^{2} dxd\tau\notag\\&+\frac12\int^{t}_{0}\int_{\r}n\nabla\Psi\cdot\nabla|\nabla\Psi|^{2} dxd\tau    \notag\\
    \leq&C \left[\frac{\gamma\epsilon}{\gamma-1}\left(\left(1+\frac{T}{\epsilon}\right)^{1-\frac{1}{\gamma}}-1\right)+\frac{\epsilon T}{\epsilon+T}+\frac{\epsilon}{2} \right] \notag\\
    \leq& C\epsilon^{\frac{1}{\gamma}}.\end{align}
    Therefore, we obtain that
    \begin{align}
\label{2018070521}
  {\mathbf{I}}_{1}\leq&  C\int^{t}_{0}\mathcal{E}(\tau)d\tau+C\epsilon^{\min\{1-\frac{1}{\gamma},\frac{1}{\gamma}\}}\notag\\&+\frac12\left[\int_{\r} | \nabla \Psi|^{2} dx-\int_{\r}| \nabla \Psi_0|^{2} dx\right]- \int^{t}_{0}\int_{\r}nu \pt \nabla \Psi dxd\tau. \end{align}

  For ${\mathbf{I}}_{2}$, using $H''(1)=p'(1)$ and observing that
  \begin{equation}\label{2018070701}
 H''(1+\epsilon\sigma)-H''(1)=\epsilon H'''(\xi)\sigma,\quad \xi \in(1,1+\epsilon\sigma)\; \mbox{or}\; (1+\epsilon\sigma,1),\end{equation}
 one gets        
  \begin{align}
\label{2018070522}
  {\mathbf{I}}_{2}  =&- \frac{1}{\epsilon}\int^{t}_{0}\int_{\r}(n-1-\epsilon\sigma) H''(1+\epsilon\sigma)\partial_{\tau}\sigma dxd\tau \notag  \\
    =&  -\frac{1}{\epsilon^{2}}\int^{t}_{0}\int_{\r}(n-1) [H''(1+\epsilon\sigma)-H''(1)]\triangle\Psi dxd\tau \notag  \\
&+\frac{1}{\epsilon^{2}}\int^{t}_{0}\int_{\r}(n-1) H''(1)\triangle\Psi dxd\tau\notag\\&+\int^{t}_{0}\int_{\r}\sigma[H''(1+\epsilon\sigma)-H''(1)]\partial_{\tau} \sigma dxd\tau+\int^{t}_{0}\int_{\r}\sigma H''(1) \partial_{\tau} \sigma dxd\tau\notag\\
=& -\frac{1}{\epsilon}\int^{t}_{0}\int_{\r}(n-1) H'''(\xi)\sigma\triangle\Psi dxd\tau+\frac{p'(1)}{\epsilon^{2}}\int^{t}_{0}\int_{\r}(n-1) \triangle\Psi dxd\tau \notag  \\
&-\int^{t}_{0}\int_{\r}  H'''(\xi) \sigma^{2} \triangle\Psi dxd\tau+p'(1)\int^{t}_{0}\int_{\r}\sigma \partial_{\tau} \sigma dxd\tau\notag\\
\leq&\frac{1}{\epsilon}\int^{t}_{0}\left\|(n-1)\mathbf{1}_{\{|n-1|<1\}}\right\|_{L^2(\r)} \|H'''(\xi) \|_{L^2(\r)}\|\sigma\|_{L^\infty(\r)}  \|\triangle\Psi\|_{L^\infty(\r)}d\tau\notag\\
&+ \frac{1}{\epsilon}\int^{t}_{0}\left\|(n-1)\mathbf{1}_{\{|n-1|\geq1\}}\right\|_{L^\gamma(\r)} \| H'''(\xi) \|_{L^\frac{\gamma}{\gamma-1}(\r)} \|\sigma\|_{L^\infty(\r)} \|\triangle\Psi\|_{L^\infty(\r)}d\tau\notag\\
&+C\int^{t}_{0} \|\sigma\|^2_{L^\infty(\r)} \|\triangle\Psi\|_{L^\infty(\r)} d\tau+\frac{p'(1)}{\epsilon^{2}}\int^{t}_{0}\int_{\r}(n-1) \triangle\Psi dxd\tau \notag\\
&+ \frac{1}{2}\left[\int_{\r}p'(1) |\sigma|^{2} dx-\int_{\r}p'(1)| \sigma_0|^{2} dx\right]\notag\\
\leq&C\int^{t}_{0}\left (1+\frac{\tau}{\epsilon}\right)^{-2}d\tau  +C\epsilon^{\frac{2}{\gamma}-1}\int^{t}_{0}\left (1+\frac{\tau}{\epsilon}\right)^{-2}d\tau+C\int^{t}_{0}\left (1+\frac{\tau}{\epsilon}\right)^{-3}d\tau  \notag\\&+\frac{p'(1)}{\epsilon^{2}}\int^{t}_{0}\int_{\r}(n-1) \triangle\Psi dxd\tau + \frac{1}{2}\left[\int_{\r}p'(1) |\sigma|^{2} dx-\int_{\r}p'(1)| \sigma_0|^{2} dx\right]\notag\\
\leq&C\left(\frac{\epsilon T}{\epsilon+T}+\frac{\epsilon^{\frac{2}{\gamma}} T}{\epsilon+T}+\frac{2T\epsilon^{2}+T^{2}\epsilon}{2(\epsilon+T)^{2}}\right)+\frac{p'(1)}{\epsilon^{2}}\int^{t}_{0}\int_{\r}(n-1) \triangle\Psi dxd\tau \notag\\&+\frac{1}{2}\left[\int_{\r}p'(1) |\sigma|^{2} dx-\int_{\r}p'(1)| \sigma_0|^{2} dx\right]\notag\\\leq&C\epsilon^{\min\{1,\frac{2}{\gamma}\}} +\frac{p'(1)}{\epsilon^{2}}\int^{t}_{0}\int_{\r}(n-1) \triangle\Psi dxd\tau \notag\\&+\frac{1}{2}\left[\int_{\r}p'(1) |\sigma|^{2} dx-\int_{\r}p'(1)| \sigma_0|^{2} dx\right].\end{align}
    
 Using  \eqref{2018070518},\eqref{2018070701} and the acoustic equations \eqref{2018062501}-\eqref{2018062502}, we have 
  \begin{align}
\label{2018070702}
    {\mathbf{I}}_{3}= & - \frac{1}{\epsilon}\int^{t}_{0}\int_{\r}nu\cdot   H''(1+\epsilon\sigma)\nabla\sigma dxd\tau \notag \\
   = & - \frac{1}{\epsilon}\int^{t}_{0}\int_{\r}nu\cdot  (H''(1+\epsilon\sigma)-H''(1))\nabla\sigma dxd\tau \notag\\& -\frac{1}{\epsilon}\int^{t}_{0}\int_{\r}nu\cdot  H''(1)\nabla\sigma dxd\tau \notag\\
   =&- \int^{t}_{0}\int_{\r}nu\cdot  H'''(\xi)\nabla\sigma dxd\tau  +\int^{t}_{0}\int_{\r}nu\partial_{\tau}   \nabla\Psi dxd\tau \notag\\
   \leq &C\int^{t}_{0}\|nu\|_{L^{\frac{2\gamma}{\gamma+1}}(\r)}\|\nabla\sigma\|_{L^{\frac{2\gamma}{\gamma-1}}(\r)}d\tau+\int^{t}_{0}\int_{\r}nu\partial_{\tau}   \nabla\Psi dxd\tau\notag\\\leq&C\int^{t}_{0} \left (1+\frac{\tau}{\epsilon}\right)^{-\frac{1}{\gamma}}d\tau  +\int^{t}_{0}\int_{\r}nu\partial_{\tau}   \nabla\Psi dxd\tau\notag\\\leq&C \left[\frac{\gamma\epsilon}{\gamma-1}\left(\left(1+\frac{T}{\epsilon}\right)^{1-\frac{1}{\gamma}}-1\right)\right] +\int^{t}_{0}\int_{\r}nu\partial_{\tau}   \nabla\Psi dxd\tau\notag\\
    \leq& C\epsilon^{\frac{1}{\gamma}}+\int^{t}_{0}\int_{\r}nu\partial_{\tau}   \nabla\Psi dxd\tau.\end{align}    
    
    In view of \eqref{2017070711}, we deduce that
 \begin{align}
\label{2018070703}
  {\mathbf{I}}_{4}=  &  -\frac{1}{\epsilon^2} \int^{t}_{0}\int_{\r}\left[p(n)-p'(1)(n-1)-p(1)\right]\Delta\Psi dxd\tau\notag\\
  &-\frac{p'(1)}{\epsilon^{2}}\int^{t}_{0}\int_{\r}(n-1) \triangle\Psi dxd\tau -\underbrace{\frac{1}{\epsilon^2} \int^{t}_{0}\int_{\r}p(1)\Delta\Psi dxd\tau}_{=0}\notag\\
   = & -\frac{\gamma-1}{\epsilon^2} \int^{t}_{0}\int_{\r}H(n)\Delta\Psi dxd\tau-\frac{p'(1)}{\epsilon^{2}}\int^{t}_{0}\int_{\r}(n-1) \triangle\Psi dxd\tau\notag\\
 = &-\frac{\gamma-1}{\epsilon^2} \int^{t}_{0}\int_{\r}[H(n)-H'(1+\epsilon\sigma)(n-1-\epsilon \sigma)-H(1+\epsilon \sigma)]\Delta\Psi dxd\tau\notag\\
  &-\frac{\gamma-1}{\epsilon^2} \int^{t}_{0}\int_{\r}[H'(1+\epsilon\sigma)(n-1)]\Delta\Psi dxd\tau\notag\\&+\frac{\gamma-1}{\epsilon}\int^{t}_{0}\int_{\r}H'(1+\epsilon\sigma)\sigma\Delta\Psi dxd\tau-\frac{\gamma-1}{\epsilon^2} \int^{t}_{0}\int_{\r}H(1+\epsilon \sigma)\Delta\Psi dxd\tau\notag\\&-\frac{p'(1)}{\epsilon^{2}}\int^{t}_{0}\int_{\r}(n-1) \triangle\Psi dxd\tau\notag\\
  =&{\mathbf{I}}_{4,1}+{\mathbf{I}}_{4,2}+{\mathbf{I}}_{4,3}+{\mathbf{I}}_{4,4}-\frac{p'(1)}{\epsilon^{2}}\int^{t}_{0}\int_{\r}(n-1) \triangle\Psi dxd\tau.\end{align}   
   For ${\mathbf{I}}_{4,1}$, we have 
   \begin{equation}
\label{2018070712}
{\mathbf{I}}_{4,1}\leq C\int^{t}_{0}\mathcal{E}(\tau)d\tau.\end{equation} 
   Noting that 
   \begin{equation}
\label{2018070714}
(1+\epsilon\sigma)^{\gamma-1}-1=(\gamma-1)\epsilon\zeta^{\gamma-2}\sigma,\quad \zeta\in(1,1+\epsilon\sigma)\; \mbox{or}\; (1+\epsilon\sigma,1),\end{equation}
  we have   
  \begin{align}
\label{2018070713}
   {\mathbf{I}}_{4,2}= & -\frac{\gamma}{\epsilon^2} \int^{t}_{0}\int_{\r}[(1+\epsilon\sigma)^{\gamma-1}-1](n-1)\Delta\Psi dxd\tau \notag \\
   = &   -\frac{\gamma(\gamma-1)}{\epsilon} \int^{t}_{0}\int_{\r}\zeta^{\gamma-2}(n-1)\sigma\Delta\Psi dxd\tau \notag \\
   \leq&\frac{C}{\epsilon}\int^{t}_{0}\left\|(n-1)\mathbf{1}_{\{|n-1|<1\}}\right\|_{L^2(\r)} \|\sigma\|_{L^2(\r)}  \|\triangle\Psi\|_{L^\infty(\r)}d\tau\notag\\
&+ \frac{C}{\epsilon}\int^{t}_{0}\left\|(n-1)\mathbf{1}_{\{|n-1|\geq1\}}\right\|_{L^\gamma(\r)} \|\sigma\|_{L^\frac{\gamma}{\gamma-1}(\r)} \|\triangle\Psi\|_{L^\infty(\r)}d\tau\notag\\\leq&C\epsilon^{\min\{1-\frac{1}{\gamma},\frac{1}{\gamma}\}}.\end{align}
  Using \eqref{2018062512},\eqref{2018070517} and\eqref{2018070714}, we  get
  \begin{align}
\label{2018070721}
 {\mathbf{I}}_{4,3}=   &  -\frac{\gamma}{\epsilon} \int^{t}_{0}\int_{\r}[(1+\epsilon\sigma)^{\gamma-1}-1]\sigma\Delta\Psi dxd\tau \notag \\
   = &   -\gamma(\gamma-1) \int^{t}_{0}\int_{\r}\zeta^{\gamma-2}\sigma^{2}\Delta\Psi dxd\tau \notag \\
   \leq&C\int^{t}_{0}\|\sigma\|^{2}_{L^2(\r)}  \|\triangle\Psi\|_{L^\infty(\r)}d\tau\notag\\
\leq&C\epsilon^{1-\frac{1}{\gamma}}.\end{align}  
  Observing that 
  \begin{equation}
\label{2018070725}
(1+\epsilon\sigma)^{\gamma}=1+\gamma\epsilon\sigma+\frac{\gamma(\gamma-1)}{2}\omega^{\gamma-2}\epsilon^{2}\sigma^{2},\quad \omega\in(1,1+\epsilon\sigma)\; \mbox{or}\; (1+\epsilon\sigma,1),\end{equation} we have
 \begin{align}
\label{2018070726}
  {\mathbf{I}}_{4,4}=&  -\frac{1}{\epsilon^2} \int^{t}_{0}\int_{\r}[(1+\epsilon\sigma)^{\gamma}-1-\gamma\epsilon\sigma]\Delta\Psi dxd\tau\notag\\=&-\frac{\gamma(\gamma-1)}{2}\int^{t}_{0}\int_{\r}\omega^{\gamma-2}\sigma^{2}\Delta\Psi dxd\tau\notag\\\leq&C\int^{t}_{0}\|\sigma\|^{2}_{L^2(\r)}  \|\triangle\Psi\|_{L^\infty(\r)}d\tau\notag\\
\leq&C\epsilon^{1-\frac{1}{\gamma}}.\end{align}    
   Then, we have
    \begin{align}
\label{2018070728}
    {\mathbf{I}}_{4}\leq C\int^{t}_{0}\mathcal{E}(\tau)d\tau+C\epsilon^{\min\{1-\frac{1}{\gamma},\frac{1}{\gamma}\}}-\frac{p'(1)}{\epsilon^{2}}\int^{t}_{0}\int_{\r}(n-1) \triangle\Psi dxd\tau.\end{align}
  For ${\mathbf{I}}_{5}$, we get 
   \begin{align}
\label{2018070726}
  {\mathbf{I}}_{5}\leq& C\epsilon^{2}\int^{t}_{0}\left\|(n-1)\mathbf{1}_{\{|n-1|<1\}}\right\|_{L^2(\r)}\|\triangle\Psi\|_{L^2(\r)}d\tau\notag\\
&+ C\epsilon^{2}\int^{t}_{0}\left\|(n-1)\mathbf{1}_{\{|n-1|\geq1\}}\right\|_{L^\gamma(\r)}  \|\triangle\Psi\|_{L^{\frac{\gamma}{\gamma-1}}(\r)}d\tau\notag\\
\leq&C\epsilon^{3}+C\epsilon^{2+\frac{2}{\gamma}}\int^{t}_{0}  \left (1+\frac{t}{\epsilon}\right)^{-\frac{1}{\gamma}}d\tau\notag\\
\leq&C\epsilon^{3}+C\epsilon^{2+\frac{3}{\gamma}},\end{align}    
    where we used \eqref{2018070518} and the following interpolation inequality for $\triangle\Psi$   
    \begin{align}
\label{2018070502}
 \|\triangle \Psi\|_{L^{\frac{\gamma}{\gamma-1}}(\r)}\leq   &
 \|\triangle\Psi\|^{\frac{\gamma-1}{\gamma}}_{L^{1}(\r)}\|\triangle \Psi\|^{\frac{1}{\gamma}}_{L^{\infty}(\r)}\notag\\
 \leq  &C\|\triangle\Psi\|^{\frac{\gamma-1}{\gamma}}_{L^{2}(\r)}\|\triangle \Psi\|^{\frac{1}{\gamma}}_{L^{\infty}(\r)}\notag\\
 \leq&C  \left (1+\frac{t}{\epsilon}\right)^{-\frac{1}{\gamma}}.  \end{align}
    For ${\mathbf{I}}_{6}$, we have    
    \begin{align}
\label{2018070801}
    {\mathbf{I}}_{6} \leq  C\int^{t}_{0}\mathcal{E}(\tau)d\tau.\end{align}
   
  Using \eqref{qmhdener},\eqref{2018062512} and the Young  inequality, we have 
  \begin{align}
\label{2018071610}
 {\mathbf{I}}_{7} = &-2\epsilon^{3}\int_{\r}\frac{\nabla \sqrt{n}\cdot\nabla\sigma}{\sqrt{1+\epsilon\sigma}}dx\leq C\epsilon^{3}\left(\int_{\r}|\nabla \sqrt{n}|^{2}dx+\int_{\r}|\nabla \sigma|^{2}dx\right)\leq C\epsilon.\end{align}
   
  For ${\mathbf{I}}_{8}$, we get 
  \begin{align}
\label{2018071611}
   {\mathbf{I}}_{8} = 2\epsilon^{4}\int_{\r} \left|\frac{\nabla\sigma}{\sqrt{1+\epsilon\sigma}}\right |^{2}dx \leq C\epsilon^{4}.\end{align} 
 
  Using \eqref{2018062512} and the Young  inequality, we have  
    \begin{align}
\label{2018071612}
 {\mathbf{I}}_{9} =& \epsilon^{4}\int_{\r}\frac{\nabla n_{0,\epsilon}^{1}\cdot\nabla n_{0}^{1}}{ \sqrt{(1+\epsilon n_{0,\epsilon}^{1})(1+\epsilon n_{0}^{1})}}dx\notag\\ \leq&C\epsilon^{4}\left(\int_{\r}|\nabla n_{0,\epsilon}^{1}|^{2}dx+\int_{\r}|\nabla n_{0}^{1}|^{2}dx\right)\notag\\\leq &C\epsilon^{4}.
\end{align}    
   
   For $ {\mathbf{I}}_{10}$, we have
   \begin{align}
\label{2018071613}
 {\mathbf{I}}_{10} =-2\epsilon^{4}\int_{\r}\left|\frac{\nabla n_{0}^{1}}{\sqrt{1+\epsilon n_{0}^{1}}}\right|^{2}dx\leq C\epsilon^{4}.\end{align}
   
     For the last term ${\mathbf{I}}_{11}$,   using the Young  inequality with $\epsilon$, one gets
   \begin{align}
\label{2018070802}
   {\mathbf{I}}_{11} =&-2\epsilon\int^{t}_{0}\int_{\r}nD(u-v-\nabla\Psi):D(v+\nabla\Psi)dxd\tau\notag \\  \leq & C \epsilon\int^{t}_{0}\int_{\r}|\sqrt{n}|^{2}dxd\tau+\epsilon\int^{t}_{0}\int_{\r}|\sqrt{n}D(u-v-\nabla\Psi)|^{2}dxd\tau\notag\\
   \leq & C \epsilon+\epsilon\int^{t}_{0}\int_{\r}n|D(u-v-\nabla\Psi)|^{2}dxd\tau.\end{align} 
     Using \eqref{2018062511} and adding the estimates for $\mathcal{E}(0),{\mathbf{I}}_{1},{\mathbf{I}}_{2},\cdots,{\mathbf{I}}_{11}$, some integrals cancel, and we end up with  
     \begin{align}
\label{2018070866}
\mathcal{E}(t)    \leq&C\left(\|u_{0,\epsilon}-u_{0}\|^2_{L^{2}(\r)}+\|n^{1}_{0,\epsilon}-n^{1}_{0}\|^2_{H^{1}(\r)}+ C\epsilon^{\min\{1-\frac{1}{\gamma},\frac{1}{\gamma}\}}\right)+C\int^{t}_{0}\mathcal{E}(\tau)d\tau,\end{align}  
   where we have used the following fact
   \begin{equation*}
\epsilon^{a}>\epsilon^{b},\quad 0<a<b
\end{equation*}
for any $\epsilon\in(0,1)$.   The integral form of the Gronwall inequality gives
   \begin{align}
\label{2018070901}
  \mathcal{E}(t) \leq  &C\left(\|u_{0,\epsilon}-u_{0}\|^2_{L^{2}(\r)}+\|n^{1}_{0,\epsilon}-n^{1}_{0}\|^2_{H^{1}(\r)}+ \epsilon^{\min\{1-\frac{1}{\gamma},\frac{1}{\gamma}\}}\right), \; t\in[0,T],\end{align}  
    which implies that we have proved Theorem \ref{th2018070308}.  Finally, we conclude by observing that Corollary  \ref{2018070906}   follows
from the inequality  \eqref {2018071401}, combined with the estimates  \eqref{2018070512}. In fact, using the Young inequality and Lemma \ref{Lem:2018070121},we have
\begin{align*}
  \|P(\sqrt{n}u)-v\|^{2}_{L^{2}(\r)}  =& \|P(\sqrt{n}u-v-\nabla\Psi)\|^{2}_{L^{2}(\r)}  \\
    \leq&   \|\sqrt{n}u-v-\nabla\Psi\|^{2}_{L^{2}(\r)}  \\
    =& \|\sqrt{n}(u-v-\nabla\Psi)+(\sqrt{n}-1)(v+\nabla\Psi)\|^{2}_{L^{2}(\r)}  \\
    \leq& 2\|\sqrt{n}(u-v-\nabla\Psi)\|^{2}_{L^{2}(\r)}+2\|(\sqrt{n}-1)(v+\nabla\Psi)\|^{2}_{L^{2}(\r)}\\
    \leq&C\mathcal{E}(t) +2\|(\sqrt{n}-1)(v+\nabla\Psi)\|^{2}_{L^{2}(\r)}\\
    \leq &C\mathcal{E}(t) +C\left\|(n-1)\mathbf{1}_{\{|n-1|<1\}}\right\|^2_{L^2(\r)}\\& +C\left\|(n-1)\mathbf{1}_{\{|n-1|\geq1\}}\right\|^{\gamma}_{L^\gamma(\r)}\\\leq&C\epsilon^{\min\{1-\frac{1}{\gamma}\}},\end{align*}
     we have used the following elementary inequality   
    \begin{equation*}
|\sqrt{x}-1|^2\leq C|x-1|^{k},\quad k\geq1
\end{equation*}
for some positive constant $C$ and any $x\geq0$.   

       \medskip
 \noindent
{\sc Acknowledgements:}   J. Yang's research
was partially supported by the Joint Funds of the National Natural
Science Foundation of China (Grant No. U1204103).


\end{document}